\documentclass[10pt,a4paper]{amsart}

\usepackage{graphicx}
\usepackage{amssymb}
\usepackage{epstopdf  }
\usepackage{hyperref}
\usepackage[usenames]{color}

\usepackage{geometry}                

\input xy
\xyoption{all}
\usepackage{tikz}
\usetikzlibrary{arrows,shapes}

\DeclareGraphicsRule{.tif}{png}{.png}{`convert #1 `dirname #1`/`basename #1 .tif`.png}

\renewcommand{\epsilon}{\varepsilon}
\renewcommand{\setminus}{\smallsetminus}
\renewcommand{\emptyset}{\varnothing}

\newtheorem{theorem}{Theorem}[section]
\newtheorem{proposition}[theorem]{Proposition}

\newtheorem{lemma}[theorem]{Lemma}

\theoremstyle{definition}
\newtheorem{example}[theorem]{Example}

\newtheorem{definition}[theorem]{Definition}

\newtheorem{remark}[theorem]{Remark}





\newcommand{\N}{\mathbb N}

\renewcommand{\L}{\mathcal L}




\newcommand{\cohom}[3]{H^{{\raise1pt\hbox{$\scriptstyle#1$}}}(#2\>\!,#3)}
\newcommand{\tatecohom}[3]%
  {\widehat H^{{\raise1pt\hbox{$\scriptstyle#1$}}}(#2\>\!,#3)}

\newcommand{\Cohom}[3]%
  {H^{{\raise1pt\hbox{$\scriptstyle#1$}}}\big(#2\>\!,#3\big)}
\newcommand{\Tatecohom}[3]%
  {\widehat H^{{\raise1pt\hbox{$\scriptstyle#1$}}}\big(#2\>\!,#3\big)}

\newcommand{\homol}[3]{H_{{\lower1pt\hbox{$\scriptstyle#1$}}}(#2\>\!,#3)}
\newcommand{\homolog}[2]{H_{{\lower1pt\hbox{$\scriptstyle#1$}}}(#2)}


\newcommand{\colim}{\varinjlim}





\newcommand{\EG}{{\operatorname{E}G}}
\newcommand{\ueg}{{\underline E}G}




\newcommand{\ov}[1]{\overline{#1}}


\title[Presentations of generalisations of Thompson's group $V$]{Presentations of generalisations of Thompson's group $V$.}

\author{C.~Mart\'inez-P\'erez}

\address{Conchita Mart\'inez-P\'erez, Departamento de Matem\'aticas, Universidad de Zaragoza,
50009 Zaragoza, Spain} \email{conmar@unizar.es}

\author{F. Matucci}

\address{
Francesco Matucci,
Instituto de Matem\'atica, Estat\'istica e Computa\c{c}\~ao Cient\'ifica, Universidade Estadual de Campinas, 13083-859, Campinas, Brazil}
\email{francesco@ime.unicamp.br}

\author{B.~Nucinkis}

\address{Brita Nucinkis, Department of Mathematics, Royal Holloway, University of London, Egham, TW20 0EX, UK.}\email{brita.nucinkis@rhul.ac.uk}

\date{\today} 
\keywords{}
\subjclass[2000]{
20J05}

\thanks{This work was partially funded by an LMS Scheme 4 grant 41209. The first named author was supported by  Gobierno de Arag\'on, European Regional Development Funds and
 MTM2015-67781-P (MINECO/FEDER). 
The second author gratefully acknowledges the Fondation Math\'ematique Jacques Hadamard (ANR-10-CAMP-0151-02 - FMJH - Investissement d'Avenir) and the Funda\c{c}\~ao para a Ci\^encia e a Tecnologia  (FCT-PEst-OE/MAT/UI0143/2014) for the support received during the development of this work.}


\begin{document}

\begin{abstract} 
We consider generalisations of Thompson's group $V$, denoted by 
$V_r(\Sigma)$, which also include the groups of Higman, Stein and Brin. It was
shown in \cite{francescobritaconcha} that under some mild conditions these groups and centralisers
of their finite subgroups
are of type
$\mathrm{F}_\infty$. Under more general conditions we show
that the groups $V_r(\Sigma)$ are finitely generated and, under the mild conditions mentioned above, we see that they are finitely presented
and give a recipe to find explicit presentations.
For the centralisers of finite subgroups we
find a suitable infinite presentation and then apply a general procedure to shorten this presentation. In the appendix, we give a proof of this general shortening procedure.
\end{abstract}

\maketitle

\section{Introduction
\label{sec:introduction}
}

\noindent The original  Thompson groups $F\leq T \leq V$  are groups of homeomorphism of the unit interval, the circle and the Cantor-set respectively. In this note we consider generalisations of these groups, which are described as groups of automorphisms of certain Cantor algebras. These groups include  Higman's \cite{higman}, Stein's \cite{stein} and Brin's \cite{brin1} generalisations of $V.$

The groups $F$, $T$ and $V$ have attracted the attention of group theorists for several reasons, one of them is that there  are nice presentations and ways to represent elements available, making it possible to prove interesting results about metrics, geodesics and decision problems. However, the situation changes when one moves to some of their generalisations. There are presentations available for Higman's  groups  $V_{n,r}$  \cite{higman}, Stein's generalisations \cite{brinsquier, stein} and Brin's higher 
dimensional Thompson groups $sV$ \cite{hennigmatucci},
but  not for more complicated generalisations such as the groups $V_r(\Sigma)$ we are considering here. These were  defined by
the first and third authors together with Kochloukova \cite{desiconbrita2, britaconcha}  and were denoted $G_r(\Sigma).$  It is worth pointing out that elements in $V_r(\Sigma)$ admit a tree-pair representation similar to that of the original groups $F,T$ and $V$.
The authors show in \cite{francescobritaconcha} that, under some mild hypotheses, being
{\sl valid} and {\sl bounded}, $V_r(\Sigma)$ is the full automorphism group of a Cantor algebra. In the same paper it is shown that under some further minor restrictions, being {\sl complete}, these groups are of type $\mathrm{F}_\infty$ and that this also implies that  centralisers of finite subgroups are of type 
$\mathrm{F}_\infty$. We introduce all necessary background in Section \ref{sec:background}.
The structure of centralisers in $V_r(\Sigma)$ is  studied in detail in \cite{francescobritaconcha, britaconcha}.

One of the objectives of the present paper is to introduce a common framework to get explicit finite generating sets for the groups $V_r(\Sigma)$ in the case when the underlying Cantor-algebra $U_r(\Sigma)$ is  valid and bounded, and explicit presentations under the additional assumption that $U_r(\Sigma)$ is complete. To do that, we construct a model for $\operatorname{E}G$ for $G=V_r(\Sigma)$ and use this model to obtain explicit presentations of these groups.
  
In Section \ref{presentation} we also give an explicit finite presentation for centralisers of finite subgroups for those $V_r(\Sigma)$ 
that are finitely presented. To do so we use the so called 
{\sl Burnside procedure} as used 
by Guralnick, Kantor, Kassabov and Lubotzky
\cite{kassabovetc}.

In the Appendix we shall give an outline and proof of the  Burnside-procedure as used in \cite{kassabovetc}.  This procedure is well known, but  we are also not aware of any proofs elsewhere. The idea to use the procedure
is to look for an easy presentation for $G$, which is somehow symmetric and elementary,
but may have even infinitely many 
generators and relations.
The Burnside procedure offers a way to cut these down to a more manageable, and sometimes finite presentation.

\subsection*{Acknowledgments} We would like to thank 
Collin Bleak for helpful discussions related to \cite{matuccietc}
and Martin Kassabov for introducing to us  the Burnside procedure used in \cite{kassabovetc}.

\section{
\label{sec:background}
Background on generalised Thompson groups}

In this section we introduce those properties of valid bounded Cantor-algebras used in this paper. For detailed definitions and notation the reader is referred to \cite[Section 2]{britaconcha}, and for proofs of statements cited here, see \cite{desiconbrita2, francescobritaconcha, britaconcha}.

Let $S=\{1,\ldots,s\}$ be a finite set of colours
and associate to each $i \in S$ an integer  $n_i>1$, called arity of the colour $i.$ For every $i\in S$ consider  the following right operations on a set $U$: 
\begin{itemize}
\item[(i)] One $n_i$-ary operation $\lambda_i\, :\,U^{n_i}\to U,$ and 
\item[(ii)] $n_i$ 1-ary operations $\alpha^1_i,\ldots,\alpha^{n_i}_i\, ; \alpha^j_i:U\to U.$ 
\end{itemize}
 We also consider, for each $i\in S$ and 
 $v\in U,$ the map 
 $$\alpha_i:U\to U^{n_i}$$
  given by
$v\alpha_i:=(v\alpha^1_i,v\alpha^2_i,\ldots,v\alpha^{n_i}_i).$ The maps $\alpha_i$ are called {\sl descending} operations, or expansions, and the maps $\lambda_i$ are called {\sl ascending} operations, or contractions.

Fix a finite set $X_r$ of cardinality $|X_r|=r$. One can define the free objects on the set $X_r$ with respect to the previous operations. To define our generalisations of Thompson's group $V$,  we will be interested in the free object constructed under the extra requirement that a certain set of laws $\Sigma$ described below must be satisfied. This last free object is what we denote $U_r(\Sigma)$ and call the (free) Cantor-algebra on $X_r$ satisfying $\Sigma$.

\begin{definition}\label{sigmadef}\cite[Section 2]{britaconcha} Fix a finite set $X_r$ of cardinality $|X_r|=r$ and consider the operations i) and ii) above defined on a set $U$. Then  $\Sigma=\Sigma_1 \cup \Sigma_2$ with $\Sigma_1$ and $\Sigma_2$ the following set of laws:

\begin{itemize}

\item[i)] $\Sigma_1$ is given by
  $$u\alpha_i\lambda_i=u,$$
 $$(u_1,\ldots,u_{n_i})\lambda_i\alpha_i=(u_1,\ldots,u_{n_i}),$$
for every $u\in U$,  $i\in S$, and $n_i$-tuple: $(u_1,\ldots,u_{n_i})\in U^{n_i}.$

\item[ii)] $\Sigma_2$ is given by
$$\Sigma_2=\bigcup_{1\leq i<i'\leq s}\Sigma_2^{i,i'}$$
 where each $\Sigma_2^{i,i'}$ is either empty or  consists  of the following laws: consider first $i$ and fix a map $f:\{1,\ldots,n_{i}\}\to\{1,\ldots,s\}$. For each $1\leq j\leq n_{i}$, we see  $\alpha_{i}^j\alpha_{f(j)}$ as a set of length 2 sequences of descending operations and
let $\Lambda_{i}=\cup_{j=1}^{n_{i}}\alpha_{i}^j\alpha_{f(j)}$.  Do the same for $i'$ (with a corresponding map $f'$) to get $\Lambda_{i'}$.
We need to assume that $f,f'$ are chosen so that $|\Lambda_i|=|\Lambda_{i'}|$  and  fix a bijection $\phi:\Lambda_{i}\to\Lambda_{i'}$.
Then $\Sigma_2^{i,i'}$ is 
$$u\nu=u\phi(\nu)\quad \nu\in \Lambda_{i},u\in U.$$
\end{itemize}

\end{definition}

From now on we work with the free object $U_r(\Sigma)$ only.
Let $B\subset U_r(\Sigma)$, $b\in B$ and $i$ a colour of arity $n_i$. The set
$$(B\setminus\{b\})\cup\{b\alpha_i^1,\ldots,b\alpha_i^{n_i}\}$$
is called a {\sl simple expansion} of $B$. Analogously, if $b_1,\ldots,b_{n_i}\subseteq B$ are pairwise distinct,
$$(B\setminus\{b_1,\ldots,b_{n_i}\})\cup\{(b_1,\ldots,b_{n_i})\lambda_i\}$$
is a {\sl simple contraction} of $B$.
A finite chain of simple expansions (contractions) is an expansion (contraction). A  subset $A\subseteq U_r(\Sigma)$ is called {\sl admissible} if it can be obtained from the set $X_r$ by finitely many expansions or contractions. If a subset $A_1$ is obtained from a subset $A$ by an expansion (simple or not), then we write $A\leq A_1$.

\begin{remark}\label{adm-basis}
Recall  that $U_r(\Sigma)$ is said to be {\sl bounded} (see \cite[Definition 2.7]{britaconcha})  if for all admissible subsets $Y$ and $Z$ such that there is some admissible $A\leq Y,Z$, there is a unique least upper bound of $Y$ and $Z$. By a unique least upper bound we mean an admissible subset $T$ such that $Y \leq T$ and $Z \leq T$, and whenever there is an admissible set $S$ also satisfying $Y\leq S$ and $Z\leq S$, then $T \leq S.$

By \cite[Lemma 2.5]{desiconbrita2}, any admissible set is a basis of $U_r(\Sigma)$. 
Conversely, by \cite[Theorem 2.5]{francescobritaconcha},
if $\Sigma$ is valid and bounded, any basis of $U_r(\Sigma)$
is also an admissible 
set.
Furthermore, for every admissible subset of cardinality $m$, we have that 
$$m\equiv r \mbox{ mod }d \qquad \mbox{ for }d:=\text{gcd}\{n_i-1\mid i=1,\ldots,s\}.$$
In particular, any basis with $m$ elements can be transformed into one of $r$ elements. Hence $U_r(\Sigma)=U_m(\Sigma)$ and we may assume that $r \leq d.$ 
\end{remark}

\begin{definition}\label{groups}\cite[Definition 2.12]{britaconcha} Let $U_r(\Sigma)$ be a valid Cantor algebra. We denote the group of all Cantor algebra automorphisms of $U_r(\Sigma)$ by $V_r(\Sigma)$. In particular, these automorphisms are induced by a  map $V\to W$, where $V$ and $W$ are admissible subsets of $U_r(\Sigma)$ of the same cardinality. We denote the action of an automorphism on the left.
\end{definition}

\medskip\noindent For example, when $\Sigma_2=\emptyset$ and $s=1,$we retrieve the original Higman-Thompson groups $G_{r,n}$ \cite{higman}.
For $s=2$ and $r=1$, the Brin-Thompson groups are now given by the set $\Sigma_2$ that can be visualised as follows:

\medskip
\begin{tikzpicture}[scale=0.6]

  \draw[black, dashed]
    (0,0) -- (1, 1.71) -- (2,0);
  \filldraw(1,1.71) circle (0.1pt) node[above=4pt]{$x$};

  \draw[black] (-0.8,-1.71) -- (0,0) --(0.8,-1.71);
  \draw[black] (1.2,-1.71) -- (2,0) --(2.8,-1.71);

      \filldraw (-0.8,-1.71) circle (0.3pt) node[below=4pt]{$1$};
      \filldraw (0.8,-1.71) circle (0.3pt) node[below=4pt]{$2$};
      \filldraw (1.2,-1.71) circle (0.3pt) node[below=4pt]{$3$};
      \filldraw (2.8,-1.71) circle (0.3pt) node[below=4pt]{$4$};

  \draw[black]
    (7,0) -- (8, 1.71) -- (9,0);
 \filldraw(8,1.71) circle (0.1pt) node[above=4pt]{$x$};

  \draw[black, dashed] (6.2,-1.71) -- (7,0) --(7.8,-1.71);
  \draw[black, dashed] (8.2,-1.71) -- (9,0) --(9.8,-1.71);

      \filldraw (6.2,-1.71) circle (0.3pt) node[below=4pt]{$1$};
      \filldraw (7.8,-1.71) circle (0.3pt) node[below=4pt]{$3$};
      \filldraw (8.2,-1.71) circle (0.3pt) node[below=4pt]{$2$};
      \filldraw (9.8,-1.71) circle (0.3pt) node[below=4pt]{$4$};

   \end{tikzpicture}

\noindent Here dotted and black lines represent expansions of different colours. For more examples the reader is referred to \cite{francescobritaconcha, britaconcha}.

\begin{remark}\label{basis}

\noindent If $U_r(\Sigma)$ is valid and bounded every element of $V_r(\Sigma)$ can be given by a bijection $V \to W$, where $V$ and $W$ are descendants of the fixed basis $X_r$.
\end{remark}

For $r=1$, this means that we can visualise elements of $V_1(\Sigma)$ by tree-pair diagrams of rooted trees, where the root represents the basis $X_1=\{x\}$. So, for example, when $s=1$ and $n=2$, $V_1(\Sigma)=V$, the original Thompson group, and the well-known generator $x_0\in F \subset V$ is described as follows:

\bigskip
\begin{tikzpicture}[scale=0.6]

  \draw[black]
    (0,0) -- (1, 1.71) -- (2,0);

  \draw[black] (1.2,-1.71) -- (2,0) --(2.8,-1.71);

      \filldraw (1.2,-1.71) circle (0.3pt) node[below=4pt]{$2$};
      \filldraw (2.8,-1.71) circle (0.3pt) node[below=4pt]{$3$};
      \filldraw (0,0) circle (0.3pt) node[below=4pt]{$1$};

\draw[black] (4.5, 0)   node{$\longrightarrow$};
\draw[black] (4.5, 0) node[above=4pt]{$x_0$};

  \draw[black]
    (7,0) -- (8, 1.71) -- (9,0);

  \draw[black] (6.2,-1.71) -- (7,0) --(7.8,-1.71);

      \filldraw (9,0) circle (0.3pt) node[below=4pt]{$3$};
          \filldraw (6.2,-1.71) circle (0.3pt) node[below=4pt]{$1$};
      \filldraw (7.8,-1.71) circle (0.3pt) node[below=4pt]{$2$};

   \end{tikzpicture}

\begin{definition}\cite[Definition 3.2]{francescobritaconcha} Let $B\leq A$ be admissible subsets of $U_r(\Sigma)$. We say that the expansion $B\leq A$ is {\sl elementary} if there are no repeated colours in the paths from elements in $B$ to their descendants in $A$.   We denote an elementary expansion by $B\preceq A.$  We say that the expansion is {\sl very elementary} if all paths have length at most 1. 
\end{definition}

\medskip\noindent Denote by  $\mathcal{P}_r$ the poset of admissible subsets in $U_r(\Sigma)$, and by $|\mathcal{P}_r|$ its geometric realisation.
We now describe  the Stein complex $\mathcal{S}_r(\Sigma)$ \cite{stein}, which is a subcomplex of $|\mathcal{P}_r|$. The vertices in $\mathcal{S}_r(\Sigma)$ are given by the admissible subsets
of $U_r(\Sigma)$. The  $k$-simplices are given by chains of  expansions $Y_0\leq\ldots\leq Y_k$, where $Y_0\preceq Y_k$ is an elementary expansion.

\medskip\noindent From now on we we will denote $V_r(\Sigma)$ by $G$.
It was shown in \cite{britaconcha} that $|\mathcal{P}_r|$ is a model for $\ueg$, the classifying space for proper actions. In the next section we will use
$\mathcal{S}_r(\Sigma)$ to construct a model for $\EG$.

\section{A model for $\text{E}G$}

\noindent In this section we construct a model for the space $\EG$ when $G$ is the automorphism group of a valid and bounded Cantor-algebra $U_r(\Sigma)$ as before. We shall use this model to get, initially infinite, presentations for our groups, which we will the reduce to obtain a finite generating set, and later a finite presentation under some extra hypothesis on $U_r(\Sigma).$

\subsection{Some technical observations} To begin with we collect a few 
technical observations that we will use later on. As seen before, 
the elements $g$ in our group $G$ can be expressed via a bijection 
between a pair of admissible subsets (or bases) $(B,B')$ both of the same cardinality. But observe 
that the pair above is not enough to determine $g$ and that
we have to specify the explicit bijection. A way to overcome this 
problem is to work with ordered bases, in the sense that instead of a 
basis $B$ viewed as a set, we will be considering an ordered tuple $A$ with 
underlying set $B$. We say $u(A)=B$ ($u$ for {\sl underlying}). A pair of ordered
tuples $(A,A'),$ both $A$ and $A'$ of the same cardinality, uniquely determines the 
element of $G$ mapping the elements of $A$ to the elements of $A'$ in the 
prescribed order and conversely, any group element is expressible in 
this way. Of course, just as for the representation of the
pair of bases, there is not a unique pair $(A,A')$ determining a 
given $g\in G$:  we 
may apply descending or ascending operations to $A$ and $A'$ in a 
consistent way to get a new pair of ordered tuples representing the same group element. Moreover, we may also permute the 
elements of both tuples  in a consistent way and still get the same 
$g$. This motivates the following definition:
given tuples $A_1$, $A_2$ with bases as underlying sets we put
$$A_1\precsim A_2\iff u(A_1)\leq u(A_2)\text{ is an elementary expansion.}$$
Equivalently, $A_1\precsim A_2$ if $u(A_1)\leq u(A_2)$ in the Stein poset.  
Abusing slightly the terminology, we will say in this case  that $A_2$ is obtained from $A_1$ by {\sl descending operations}. Essentially this means that we are considering the permutation of the elements of a tuple as a new type of descending operation. Of course this could equally be viewed as an \lq\lq ascending" operation, but it turns out to be convenient to view it as descending. If we want to record the precise operations that yield $A_2$ when applied to $A_1$ we will write
$$A_1\buildrel\epsilon\over\precsim A_2$$ 
and will also set $A_2=A_1\epsilon$. Observe that $\epsilon$ can be seen as a precise recipe to get $A_2$, and in $\epsilon$ it is encoded exactly which elements are modified, permuted and so on.

\subsection{The model for $\text{E}G$}\label{eg}

Let $Z$ be the complex constructed as follows: The  points of $Z$ are the ordered tuples $A$ with underlying set a basis $u(A)$ in the Stein complex $\mathcal{S}_r(\Sigma)$. For each chain
$$A_0\precsim\ldots\precsim A_k$$
we attach an (oriented) $k$-simplex at the
vertices $A_0,\ldots,A_k$. Observe that there might be repeated 
vertices, so this is not a simplicial complex but rather has the structure 
of a $\Delta$-complex, see \cite[Section 2.1]{hatcher}.
The group $G$ acts on the set of bases, and using that action one can 
define a $G$-action on $Z$ in the obvious way. Note that this action is free. In particular this 
implies that two different 1-simplices starting in $A_0$, say 
$A_0\precsim A_1$ and $A_0\precsim A_1'$ cannot be in the same 
$G$-orbit. Hence they yield different 1-simplices in the quotient complex 
$Z/G$. Conversely, if $\ov{A}_0\buildrel{\ov{\epsilon}}\over\to \ov{A}_1$ is an 
edge in $Z/G$, then once we have fixed a lift $A_0$ of $\ov{A}_0$ to 
$Z$, $\ov{\epsilon}$ lifts to a unique 1-simplex of $Z$. Therefore there 
is some well defined set of descending operations  yielding a tuple 
$A_1'$ which is uniquely determined so that 
$A_0\buildrel\epsilon\over\precsim A_1'$ is the lift of $\ov{\epsilon}$. Moreover, the tuple 
$A_1'$ is uniquely determined. Note that we have the extra 
restriction coming from the Stein poset: we can only apply  
descending operations of the same colour once to any element of $A_0$.

\noindent Applying the same argument implies that this also holds for any lift  of a path in $Z/G$ to $Z$.

\medskip\noindent We now show that $Z$ is contractible by using the contractibility of $\mathcal{S}_r(\Sigma).$
There is  a $G$-map 
$$u:Z\to\mathcal{S}_r(\Sigma)$$
associating the underlying basis to an ordered tuple.

Fix a basis $B\in\mathcal{S}_r(\Sigma)$  of cardinality $k$. Then $u^{-1}(B)$ is the full subcomplex of $Z$ with 0-simplices given by the 
tuples with underlying set $B$, i.e. given by all  possible permutations of the elements in $B$. Let $H$ the stabiliser of $B$ in $G$. Then $H$ is isomorphic to  the symmetric group of degree $k$ and acts freely on the 0-simplices of $u^{-1}(B)$. In fact we may choose a bijection between the 0-simplices of $u^{-1}(B)$ and the elements of $H$ and the definition of the complex structure of $Z$ means that any $(k+1)$-elements subset of 0-simplices spans a $k$-simplex. 

For example if $H=S_2$ is the symmetric group on two letters with elements $1$ and $x$, then the $1$-simplices are $\{1,1\}$, $\{1,x\}$, $\{x,1\}$ and $\{x,x\}$, and the $2$-simplices are $\{1,1,1\}$, $ \{1,1,x\}$ etc.

In other words, $u^{-1}(B)$ is easily seen to be the usual complex associated to the bar resolution of the finite group $H$, see for example \cite[Example 1B.7]{hatcher}. In particular this shows that  that $u^{-1}(B)$ is contractible.

Using  Quillen's Theorem A \cite{quillen}, we can now show  that $u$ is a homotopy equivalence. To see this, let $J_Z$  be the category with objects the simplices of $Z$ and morphisms given by the face relations. Note that since  $Z$ is not a simplicial complex, this is not a poset. Let $J_S$ be the poset of simplices in  $\mathcal{S}_r(\Sigma)$. The map $u$ induces a functor
$$J_u:J_Z\to J_S,$$
and the geometric realisations of nerves of the categories $J_Z$ and $J_S$ are the barycentric subdivisions of $Z$ and $\mathcal{S}_r(\Sigma)$ respectively. Once  we show that for any $\sigma:B_0<B_1<\ldots<B_t$ in $J_S,$
$$J_u/\sigma:=\{\tau\in J_Z\mid J_u(\tau)\text{ is a subsimplex of } \sigma\}$$
is a contractible subcategory of $J_Z,$  we can use Quillen's Theorem A to deduce that $J_u$  is a homotopy equivalence. The category $J_u/\sigma$ is just the category with objects the simplices in the join
$$u^{-1}(B_0)\star\ldots\star u^{-1}(B_t)$$
and morphisms given by face relations. As $u^{-1}(B_0)\star\ldots\star u^{-1}(B_t)$ is contractible, this category is also contractible.
Hence $J_u$ is a homotopy equivalence and thus $u$ is, too. Since $\mathcal{S}_r(\Sigma)$ is contractible we deduce that $Z$ is contractible as required.

\section{An infinite presentation}
\label{sec:infinite-presentation}

\noindent In this section we use the model for $\operatorname{E}G$ just constructed to obtain a presentation for our group. As the model is of infinite type, our presentation will initially be infinite, but we will see in the next section how it is possible in the case when the Cantor-algebra is valid and bounded  to reduce  the generating system to a finite one.

\noindent We obtain our presentation using the following well known result that we recall here for the reader's convenience. 

\begin{theorem}(\cite[Theorem 3.1.16 and Corollary 3.1.17]{geoghegan})\label{gpresentation} Let $G$ be a group and $Z$ a simply connected CW-complex with a free $G$-action such that $Z/G$ is oriented and path connected. Let  $\mathcal{T}$ be a maximal tree in $Z/G$. Let
\begin{itemize}
\item $W$ be the set of (oriented) 1-cells of $Z/G$,

\item $R$ be the set of words in the alphabet $W\cup W^{-1}$ obtained as follows: for each (oriented) 2-cell $e^2_\gamma$ in $Z/G$, let
 $\tau(e^2_\gamma)$ be a word representing the boundary $\delta e^2_\gamma$ and set
 $$R=\{\tau(e^2_\gamma)\mid e^2_\gamma\text{ oriented 2-cell of }Z/G\}.$$

\item $S\subset W$ be the set of (oriented) 1-cells of $\mathcal{T}$ (seen as one letter words in $W$).
\end{itemize}
Then
$$\langle W\mid R\cup S\rangle$$
is a presentation of the group $G \cong \pi_1(Z/G)$. If moreover $Z/G$ has finite 2-skeleton, then this is a finite presentation.
\end{theorem}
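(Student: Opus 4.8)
The plan is to derive this from covering-space theory together with the standard combinatorial description of the fundamental group of a $2$-complex; the statement is essentially classical, and indeed is the content of the cited results in \cite{geoghegan}, so the real work is only in assembling the pieces.

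First I would note that, since the $G$-action on $Z$ is free and cellular, the quotient map $p\colon Z\to Z/G$ is a regular covering with group of deck transformations $G$. As $Z$ is simply connected it is the universal cover of $Z/G$, and the canonical identification of the deck group of the universal cover with the fundamental group yields an isomorphism $G\cong\pi_1(Z/G,\ast)$, where $\ast$ is any chosen $0$-cell. Because $\pi_1$ depends only on the $2$-skeleton, we may work with $(Z/G)^{(2)}$ from now on.

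Next I would compute $\pi_1\big((Z/G)^{(2)},\ast\big)$ cell by cell. Collapsing the maximal tree $\mathcal T$ is a homotopy equivalence on the $1$-skeleton, after which the $1$-skeleton becomes a wedge of circles indexed by $W\setminus S$; equivalently, $\pi_1$ of the $1$-skeleton has the presentation $\langle W\mid S\rangle$, in which each $s\in S$ — read as a loop based at $\ast$ by travelling out and back along $\mathcal T$ — is declared trivial. Attaching the $2$-cell $e^2_\gamma$ along its boundary kills the corresponding element of $\pi_1$ of the $1$-skeleton, and writing that element as the word $\tau(e^2_\gamma)$ in the letters $W$ (here one uses the fixed orientations of the cells, and paths in $\mathcal T$ to join the attaching loop to $\ast$), the Seifert--van Kampen theorem gives $\pi_1\big((Z/G)^{(2)},\ast\big)=\langle W\mid R\cup S\rangle$. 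Combined with the isomorphism of the previous step, this is the asserted presentation of $G$.

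The final sentence is then immediate: if $Z/G$ has finite $2$-skeleton, then $W$ and $R$ are finite and $S\subseteq W$, so $\langle W\mid R\cup S\rangle$ is a finite presentation. The one genuinely delicate point is bookkeeping rather than mathematics: one must fix, once and for all, orientations of the $1$- and $2$-cells and, for each $2$-cell, a path in $\mathcal T$ from $\ast$ to the base $0$-cell of its attaching loop, so that the relator words $\tau(e^2_\gamma)$ are unambiguous; altering these choices changes the relators only by conjugation and by relations already present in $S$, hence does not change the presented group. I expect this normalisation step, not any conceptual hurdle, to be the part that requires the most care.
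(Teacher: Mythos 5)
Your argument is correct and is exactly the classical covering-space/van Kampen derivation that the paper does not reprove but simply cites from Geoghegan (Theorem 3.1.16 and Corollary 3.1.17): $Z$ is the universal cover so $G\cong\pi_1(Z/G)$, and the presentation is read off the $2$-skeleton after collapsing the maximal tree. The only point worth noting is that you should justify that $Z\to Z/G$ is a covering for a free \emph{cellular} action (free as a $G$-CW-complex), which is the hypothesis actually in force here; with that said, there is no gap.
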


 \subsection{The isomorphism $G\cong \pi_1(Z/G)$.}\label{gensys1}
 We now give an explicit isomorphism between $G$ and the fundamental group of $Z/G$, where we are coming back to our previous notation so $G=V_r(\Sigma)$ and $Z$ is the complex of Subsection \ref{eg}. The standard way to do that is to fix some point $x_0\in Z$ and map the element  $g\in G$ to the loop in $Z/G$ obtained by taking the quotient of a path from  $x_0$ to $gx_0$ in $Z$. 
We shall take as $x_0$ a tuple with underlying set our preferred  basis of $r$ elements $X_r$. To ease notation, we denote this tuple  by $X_r$ as well.  As the $G$-action on $Z$ preserves the cardinality of each tuple, the 0-simplices of $Z/G$ correspond to the possible cardinalities of tuples (or of bases). By Remark \ref{adm-basis}, we recall that the possible cardinalities of the bases are exactly the integers  congruent to $r$ modulo $d$  where $n_1,\ldots,n_s$ are the arities and 
$$d=\text{gcd}(n_1-1,\ldots,n_s-1).$$
So  the 0-simplices of $Z/G$  can be labelled as
 $$\{\ov{X}_i\mid i\equiv r\text{ mod }d\}$$
 where the subindex is the cardinality of the associated bases. 
 Now, choose a maximal tree $\mathcal{T}$ in $Z/G$. The vertices of $\mathcal{T}$ are all the  0-simplices above and there is a unique path in $\mathcal{T}$ from $\ov{X}_r$ to each other $\ov{X}_{i}$. This path determines uniquely a precise tuple $X_i$ that is a lift of 
$\ov{X}_{i}$ (observe that  $X_i$ depends on the choice of $\mathcal{T}$).

 Let $\ov{X}_{i}\buildrel{\ov{\epsilon}}\over\to\ov{X}_{j}$ be an edge (thus $i\leq j$). By the comments above there is a uniquely determined lift  $X_{i}\buildrel{\epsilon}\over\to X_{j}'$ of $\ov{\epsilon}$, here $X_{j}'$ is a new tuple which  is in the same orbit as $X_{j}$. Therefore there is a uniquely determined $g\in G$ such that $X_{j}'=gX_{j},$ and this is precisely  the element in $G$ corresponding to the generator $\ov{\epsilon}\in\pi_1(Z/G)$. We have $g=(X_{j},X_{j}')$ and $X_{j}'=X_{i}\epsilon$.

\begin{example}\label{eltsV} Let $G$ be the original Thompson group $V$. In particular, $r=1, s=1, n=2.$ We can represent bases of $U_1(\Sigma)$ by finite rooted binary trees, and hence can choose $X_1$ to be a single point,  and  $X_2$ and $X_3$ to be the bases represented  as follows:

\medskip

 \begin{tikzpicture}[scale=0.6]
 \draw[black]
    (0,0) -- (1, 1.71) -- (2,0);

      \filldraw (2,0) circle (0.3pt) node[below=4pt]{$2$};
      \filldraw (0,0) circle (0.3pt) node[below=4pt]{$1$};

      \draw[black] (1,0)  node[below=30pt]{$X_2$};

  \draw[black]
    (7,0) -- (8, 1.71) -- (9,0);

  \draw[black] (8.2,-1.71) -- (9,0) --(9.8,-1.71);

      \filldraw (7,0) circle (0.3pt) node[below=4pt]{$1$};
          \filldraw (8.2,-1.71) circle (0.3pt) node[below=4pt]{$2$};
      \filldraw (9.8,-1.71) circle (0.3pt) node[below=4pt]{$3$};

 \draw[black] (8,0)  node[below=45pt]{$X_3$};

\end{tikzpicture}

Suppose we take $\varepsilon$ to be the expansion of $X_2$ on the left-hand leaf. This gives us  $X_3'$ as follows:
\medskip
\begin{tikzpicture}[scale=0.6]

  \draw[black]
    (0,0) -- (1, 1.71) -- (2,0);

  \draw[black] (-0.8,-1.71) -- (0,0) --(0.8,-1.71);

      \filldraw (-0.8,-1.71) circle (0.3pt) node[below=4pt]{$1$};
      \filldraw (0.8,-1.71) circle (0.3pt) node[below=4pt]{$2$};
      \filldraw (2,0) circle (0.3pt) node[below=4pt]{$3$};

\end{tikzpicture}

\medskip\noindent and the corresponding element of $V$ is $x_0$ as described after Example \ref{basis}.

\end{example}

 \subsection{The maximal tree $ \mathcal{T}$.}\label{gensys2} To be able to write down an explicit presentation, the choice for $\mathcal{T}$ becomes important. This relies heavily on the choice of representative for $\ov{X_i}$ above.  This amounts to choosing a particular set of bases $X_i$ in $U_r(\Sigma),$ where $i\equiv r \text{ mod } d.$

 \begin{example}\label{treeV} For $G=sV$ we have $r=1$ and for each $k \in \N$ there is a basis $X_k.$ Again, these can be represented by finite rooted binary trees. Now fix a colour $i\in S$ and choose the $X_k$ as follows: we begin with $X_1$  our fixed one-element basis represented by a single point. Now $X_2$ is the basis obtained by applying the descending operation of colour $i$ to $X_1.$ We successively chose $X_k$ as obtained from $X_{k-1}$ by applying  the descending operation of colour $i$ to the last element of $X_{k-1}$ and labeling the elements in successive order. The representation for $X_k$ by a binary tree then looks as follows:
 
 \medskip
 
  \begin{tikzpicture}[scale=0.6]
  \draw[black] (0,6) -- (1,7.5) -- (4,3);
   \draw[black] (1,4.5) -- (2,6);
   \draw[black] (2,3) -- (3,4.5);
 \draw[black] (4,0) -- (5,1.5) -- (6,0);
 \draw[black,dashed] (4,3) -- (5,1.5);

  \filldraw (0,6) circle (0.3pt) node[below=4pt]{1};
   \filldraw (1, 4.5) circle (0.3pt) node[below=4pt]{2};
    \filldraw (2,3) circle (0.3pt) node[below=4pt]{3};
 \filldraw (4,0) circle (0.3pt) node[below=4pt]{$k-1$};
  \filldraw (6,0) circle (0.3pt) node[below=4pt]{$k$};
\end{tikzpicture}

\medskip
\noindent Notice that for $V_1(\Sigma)$ we can always choose the $X_k$ to be represented by a rightmost tree as above, provided that  all colours have the same arity. Now the construction shows that the maximal tree $\mathcal{T}$ in $Z/G$ is a rooted infinite line.
\end{example}

\noindent For example, the baker's map $b \in 2V$ can be easily described using the bases chosen above. Let $X_1$ be a single point and $X_2$ be 
as in Example \ref{eltsV}; note that we expanded with colour $1$. Now we consider $X_2'$ the basis obtained from $X_1$ by expanding once with colour $2$ (represented by a dashed line). Hence this gives rise to the element $b \in 2V.$

\medskip

 \begin{tikzpicture}[scale=0.6]
 \draw[black]
    (0,0) -- (1, 1.71) -- (2,0);

      \filldraw (2,0) circle (0.3pt) node[below=4pt]{$2$};
      \filldraw (0,0) circle (0.3pt) node[below=4pt]{$1$};

      \draw[black] (1,0)  node[below=26pt]{$X_2$};

     \draw[black, dashed]
    (6,0) -- (7, 1.71) -- (8,0);

  \draw[black] (4.5, 0)   node{$\longrightarrow$};
\draw[black] (4.5, 0) node[above=4pt]{$b$};

      \filldraw (6,0) circle (0.3pt) node[below=4pt]{$1$};
      \filldraw (8,0) circle (0.3pt) node[below=4pt]{$2$};

      \draw[black] (7,0)  node[below=25pt]{$X'_2$};

\end{tikzpicture}

\medskip\noindent For the general case with mixed arities we will not be able to find such a straightforward set of representatives $X_i$ as before. We will show that we can, however, find a maximal tree $\mathcal{T}$ in $Z/G$, whose vertices are all but a finite number obtained by a step-by step process beginning with our fixed basis $X_r$ and then  expanding the last element of a basis previously constructed.

\begin{example}\label{mixed arities} Let $V_r(\Sigma)$ be the group given by $r=1$, $s=2$, $n_1=5$ and $n_2=7.$ Then $d=2$ and  our chosen set of bases is of the form
$$\{X_i \, |\, i \equiv 1 \mbox{ mod } 2\}.$$
By simply expanding $X_1$ by the colours $1$ and $2$ respectively, we obtain $X_5$ and $X_7$. To  obtain a $X_3$ we could contract the last $5$ elements of $X_7$ by colour $1$, but there is no way to obtain $X_3$ from $X_1$ by simply expanding.
\end{example}

\begin{remark}\label{max-tree}
We now describe the construction of our preferred maximal tree $\mathcal{T}$ in $Z/G$, where $G=V_r(\Sigma)$ is the automorphism group of a valid and bounded Cantor algebra. We begin by showing that we can obtain all but finitely many of the bases
$$\{X_i \,|\, i \equiv r \mbox{ mod } d\}$$
from $X_r$ applying descending operations only. 
In other words
$$\{r+\sum_{i=1}^s k_i(n_i-1)\mid 0\leq k_1,\ldots,k_s\}\cup P=\{r+kd\mid 0\leq k\},$$
where $P$ is a finite set of integers. To see this, observe first that the problem can be reduced to the case when $r=0$ and $d=1$. Now choose integers $k_1,\ldots,k_s$ such that 
$$1=\sum_{i=1}^s k_i(n_i-1)$$
and use them to produce integers $m_1,...,m_s$ with  $0\leq m_2,\ldots,m_s$ such that
$$1=\sum_{i=1}^s m_i(n_i-1).$$
Hence
$$1\equiv \sum_{i=2}^s m_i(n_i-1)   \mbox{ mod } m_1.$$
Multiplying this expression by the integers $2,\ldots,m_1$ we get positive numbers $a_1,\ldots,a_{m_1}$ which are a complete set of representatives of the residues modulo $m_1$ and such that they all belong to $\sum_{i=1}^s\N(n_i-1)$. Now, let $m$ be any integer with $m\geq \mbox{max}\{a_i\,|\, 1\leq i\leq m_1\}.$ Then for some such $i$, we have $m\equiv a_i$ modulo $m_1$ thus $m-a_i=lm_1$ for some $l\geq 0$. From this we deduce that  $m$ also belongs to $\sum_{i=1}^s\N(n_i-1)$.

It is now easy to find a (non maximal) directed tree in $Z/G$ having $\ov{X}_r$ as root and such that the cardinalities of the vertices are precisely the set
$r+\sum_{i=1}^s\N(n_i-1)$. Moreover, we can do it in such a way that the descending operations are always applied to the last element of each tuple. 
There are only finitely many points of our space $Z/G$ not in this tree. Choose one of them and consider a path from that point to some point of the tree Adding this path we get a new tree which is no longer rooted. If there are still points left then repeat the process. Eventually, we get a tree with the desired properties and having only finitely many roots.
\end{remark}

  \subsection{The presentation}\label{gensys3} Now we apply Theorem \ref{gpresentation} to produce an explicit presentation. We do get an abstract group presentation but we can also write it down as a presentation in terms of elements given by pairs of ordered bases using the explicit isomorphism in Subsection \ref{gensys1}, which allows one to recognise the group elements in a much more familiar way.  
  Recall that we have fixed a set of tuples 
 $$\{X_i\mid i\equiv r\text{ mod }d\}$$
 which are lifts of the nodes of our tree $\mathcal{T}$. Moreover there is a tree in $Z$ that 
 is a lift of $\mathcal{T}$.

By \cite[Theorem 3.1.16]{geoghegan}, $\pi_1(Z/G)$ is  generated by the edges in $Z/G$, i.e. by the 1-simplices $\ov{X}_{i}\buildrel{\ov{\epsilon}}\over\to\ov{X}_{j}$ in $Z/G$. As we have seen before, these correspond to elements $g\in G$ which are given by pairs $(X_{j}, X_{i}\epsilon)$
where $\epsilon$ is a set of descending operations. 

There are two sets of relators:

\begin{enumerate}
\item Relators of the form $\ov{\epsilon}=1$ whenever $\ov{\epsilon}$ is an edge in the tree $\mathcal{T}$. This means that there are tuples $X_i$ and $X_j$ in $\mathcal{T}$ such that $X_j$ is obtained from $X_i$ performing the operations $\epsilon$. The group element that corresponds to $\epsilon$ is $(X_j,X_j)$.  
\item Relators obtained from the boundaries of the 2-cells in $Z/G$. The 2-cells of $Z/G$ come from 2-cells in $Z$ and these are of the form $A_0\precsim A_1\precsim A_2$. Let $\epsilon_1$ be the set of operations needed to obtain $A_1$ from $A_0$, $\epsilon_2$ the set of operations needed to obtain $A_2$ from $A_1$ and $\epsilon$ the composition of $\epsilon_1$ and $\epsilon_2$. Passing down to the quotient $Z/G$ we get a 2-cell with boundary labelled $\ov{\epsilon}_1$, 
$\ov{\epsilon}_2$ and $\ov{\epsilon}$. So we have the relator
$$
\ov{\epsilon}=\ov{\epsilon}_1\ov{\epsilon}_2.$$
All this means  that this second set of relators consists of the ``composition of paths''. We want to write down this in terms of pairs of ordered bases. Let $i$ be the cardinality of $A_0$ and $j_1$ the cardinality of $A_1$. The edge $\ov{\epsilon}_1$ represents the element $g_1=(X_{j_1},X_i\epsilon_1)\in G$. We may apply the descending operations $\epsilon_2$ to this pair and then we observe that also $g_1=(X_{j_1}\epsilon_2,X_i\epsilon_1\epsilon_2).$ Note here that this follows from the definition of tree pair representation, and we do not need anything about the presentation that we are building. 
    Let $j_2$ be the cardinality of $A_2$, then $\ov{\epsilon}_2$ represents the element $g_2=(X_{j_2},X_{j_1}\epsilon_2)$ and $\ov{\epsilon}$ represents $g=(X_{j_2},X_i\epsilon)$. So we get the relator $g=g_1g_2$. In the particular case when $X_{j_1}\epsilon_2$  belongs to the lift of our tree $\mathcal{T},$ or equivalently when $\ov{\epsilon}_2$ belongs to $\mathcal{T}$, there is also a relator $g_2=1$ and we deduce $g=g_1$. This can also be seen using tree pairs:   as $X_{j_2}$ belongs to the prefixed set of nodes and has the same cardinality as $X_{j_1}\epsilon_2$, we must have $X_{j_1}\epsilon_2=X_{j_2}$.
\end{enumerate}
    
We may summarise as follows:

$$G=\langle W\mid R\rangle$$
where 
 $$W=\{(X_{j}, X_{i}\epsilon)\mid\epsilon \text{ is a sequence of descending operations and }X_i\neq X_j\}\,\text{ and}$$
$$R=\{g=g_2g_1\mid g=(X_{j_2},X_{j_1}\epsilon_2),\, g_1=(X_{j_1},X_i\epsilon_1),\, g_2=(X_{j_2},X_{j_1}\epsilon_2),\, \text{and }\epsilon=\epsilon_1\epsilon_2\}.$$

\noindent Alternatively, we may delete the pairs $(X_{j}, X_{i}\epsilon)$ such that $\ov{\epsilon}$ lies in the tree $\mathcal{T}$ from our list of generators.


\subsection{Reducing the generating set}\label{gensys4}  A quick look to the generating set we have just obtained shows that it is far too big. Reducing it can be a complicated task but there is a reduction that seems natural: our generators come from edges in $Z/G$ and these edges come from descending operations, so one expects that edges coming from \lq\lq very elementary operations" should be enough. 
This is in fact the case but to make it more precise we need now some additional technicalities. Let us fix what should be called \lq\lq very elementary" in our context.  An edge $A_1\buildrel\epsilon\over\precsim A_2$ in $Z$  is {\sl very elementary} if it consists of a single operation, i.e., if it is either a permutation or it is a single descending operation (in this case $u(A_1)<u(A_2)$ is very elementary) but we do not allow composition of both. The case when $u(A_1)<u(A_2)$ will be termed {\sl strict} and for these type of operations we will assume that if the original tuple is
$(x_1,\ldots,x_i)$ and we apply the descending operation $\alpha$ at the $k$-th element then the resulting tuple is
$$(x_1,\ldots,x_{k-1},x_k\alpha^1,\ldots,x_k\alpha^{n_\alpha},x_{k+1},\ldots,x_i).$$
Any $\epsilon$ can be written as a composition of very elementary operations. 
Of course it may happen that different sequences of operations give the same result when applied to the same tuple. The following happens in the situations below, which we shall refer to as {\sl moves}:

\begin{itemize}
\item[i)] Disjoint type: we may apply two very elementary strict descending operations acting on distinct elements of a tuple and we get the same regardless of the order of application of these
two operations.

\item[ii)] $\Sigma$ type: we have different chains of elementary strict descending operations such that, up to a permutation, give the same result when applied to any element of any tuple and which come from the defining relations for the algebra encoded in $\Sigma$.

\item[iii)] Permutation-descending: we may first permute the elements of a tuple and then apply a very elementary strict descending operation or do it the other way around in a consistent manner and get the same result.

\item[iv)] Permutation: the composition of two permutations is still a permutation.
\end{itemize}

\begin{lemma}\label{moves} Let $A_1,A_2$ be tuples. If two different chains of very elementary descending operations yield $A_2$ when applied to $A_1$, then one can be obtained from the other by repeated application of moves of the four types above. 
\end{lemma}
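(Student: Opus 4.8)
The plan is to set this up as a confluence/Newman-type argument in a suitable rewriting system. First I would make precise the combinatorial data of a ``chain of very elementary descending operations applied to $A_1$'': such a chain is a word $w=o_1 o_2\cdots o_m$, where each $o_t$ is either a transposition of two adjacent positions or a strict very elementary descending operation $(\alpha,k)$ recording the colour $\alpha$ and the position $k$ at which it is applied (with the position-shifting convention fixed in Subsection~\ref{gensys4}). Two such words are \emph{equivalent} if they can be joined by a finite sequence of the moves (i)--(iv), read in either direction. The claim is exactly that any two words producing the same final tuple $A_2$ from $A_1$ are equivalent. Note first that the target $A_2$ already forces the \emph{multiset} of strict operations used (up to the bookkeeping of positions): by Remark~\ref{adm-basis}, $u(A_1)\le u(A_2)$ is an elementary expansion, so for each element of $u(A_1)$ the set of descendants in $u(A_2)$ is determined, and the constraint coming from the Stein poset (no repeated colour on any path) means each element is expanded at most once, by a uniquely determined colour. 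Hence the two words $w,w'$ use strict operations indexed by the ``same'' elements, with the same colours; they can only differ in (a) the order in which the strict operations are performed, (b) the interleaving with permutations, and (c) genuine coincidences forced by $\Sigma$ (when two different chains of expansions are identified by a defining law), which is precisely what move (ii) absorbs.

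The core step is a local-to-global argument. I would first reduce to the case where $A_1$ and $A_2$ differ by \emph{no permutation content} modulo the strict part --- i.e. normalise by pushing all permutations to one end using moves (iii) and (iv): any word $w$ is equivalent, via repeated permutation-descending moves and composition-of-permutations moves, to one of the form $(\text{strict part})\cdot(\text{single permutation})$, or with the permutation at the front. Once both $w$ and $w'$ are brought to the form $s\cdot\pi$ and $s'\cdot\pi'$ with $s,s'$ purely strict, the final tuples agree iff the underlying \emph{sets} agree and the permutations match up, so one reduces to showing: two purely strict chains producing the same ordered tuple are equivalent under moves (i) and (ii) alone. Here one argues by induction on the common length $m$ (all strict chains joining $A_1$ to the fixed $A_2$ have the same length, being determined by the expansion $u(A_1)\le u(A_2)$). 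Look at the first operation $o_1$ of $w$ and the first operation $o_1'$ of $w'$. If they act on the same element (position block), then either they are literally equal --- and we strip them off and induct --- or they are the two different length-$1$ or length-$2$ expansion patterns identified by a law in $\Sigma$, in which case move (ii) converts the first one or two letters of $w$ into those of $w'$, after which we again strip and induct. If $o_1$ and $o_1'$ act on \emph{disjoint} elements, then $o_1'$ occurs somewhere in $w$ (its element must eventually be expanded), and using move (i) repeatedly we commute that occurrence of $o_1'$ to the front of $w$ without changing the final result; now the two words begin with the same letter and we induct.

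The technical heart --- and the step I expect to be the main obstacle --- is the disjoint-commutation bookkeeping: because a strict operation at position $k$ shifts the positions of all later elements by $n_\alpha-1$, ``acting on disjoint elements'' is a statement about the abstract elements of the tuple, not about raw position indices, and one must check carefully that move (i), as literally stated on positions, really does let one transpose any two position-disjoint strict operations past each other with the correct re-indexing, and that commuting a deep occurrence of $o_1'$ all the way to the front only requires disjoint-type moves (never a collision with an ancestor, which is ruled out precisely because the expansion is elementary so no element is touched twice). A secondary subtlety is the $\Sigma$-type move: move (ii) is stated for ``chains of elementary strict descending operations'' identified up to permutation, so when I invoke it to reconcile $o_1$ with $o_1'$ I may temporarily introduce a permutation, which I then push back out using moves (iii)--(iv); I would state this as a small sublemma (``any two strict chains realising the same simple or two-step expansion of a single element are equivalent via (ii) possibly followed by (iii)--(iv)'') and prove it directly from Definition~\ref{sigmadef}, since $\Sigma_1$ gives no identifications among descending-only chains and $\Sigma_2$ gives exactly the length-$\le 2$ identifications the move describes. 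With that sublemma in hand, the induction above goes through and gives the result.
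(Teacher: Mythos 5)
Your overall strategy coincides with the paper's: normalise both chains by pushing all permutations to the end with moves (iii) and (iv), match the purely strict parts using moves (i) and (ii) (re-exporting the permutations that the $\Sigma$-type moves introduce via further moves of types (iii) and (iv)), and finally reconcile the two trailing permutations with a single move of type (iv). The paper carries out the middle step without your explicit induction, so your write-up is, if anything, more detailed. However, two of the claims you use to set up that induction are incorrect and would need repair before the argument is complete.

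First, you assert that since $u(A_1)\le u(A_2)$ is an elementary expansion, ``each element is expanded at most once, by a uniquely determined colour,'' and hence that the multiset of strict operations is essentially forced by $A_2$. This conflates \emph{elementary} with \emph{very elementary}: an elementary expansion forbids only repeated colours along a path, so a path from an element of $u(A_1)$ to its descendants in $u(A_2)$ can have length up to $s$, and the $\Sigma_2$ laws identify a chain consisting of one colour-$i$ operation followed by $n_i$ operations of colours $f(1),\dots,f(n_i)$ with a chain consisting of one colour-$i'$ operation followed by $n_{i'}$ operations. Two strict chains realising the same expansion can therefore use genuinely different multisets of operations --- different colours and different numbers of letters. (You do acknowledge this under your item (c), but it contradicts the sentence immediately preceding it.) Second, and as a consequence, your induction measure fails: it is not true that ``all strict chains joining $A_1$ to the fixed $A_2$ have the same length'' once the arities are mixed, since the two sides of a $\Sigma_2$ law have lengths $1+n_i$ and $1+n_{i'}$. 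The induction survives if you replace ``common length $m$'' by, say, the length of one fixed chain or the number of nodes of $u(A_2)$ not yet produced: after reconciling the initial operations (possibly converting a whole $\Sigma$-block of $w$ into the corresponding block of $w'$ rather than a single letter) the measure strictly decreases. With those two corrections your argument goes through and is essentially the proof given in the paper.
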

\begin{proof} By doing moves of types iii) and iv) only we may assume that our two chains are of the form
$$\epsilon_1\epsilon_2\ldots\epsilon_t\sigma,$$
$$\epsilon'_1\epsilon'_2\ldots\epsilon'_{t'}\sigma',$$
where all  $\epsilon_i$, $\epsilon_i'$ are very elementary and strict and $\sigma$, $\sigma'$ are permutations. 
Consider first what happens when we look at the underlying sets $u(A_1)$ and $u(A_2)$. The fact that both series of operations give the same set when applied to $u(A_1)$, implies that, for each particular element, we are either performing the same operation or the same operation up to applying some of the relators encoded in $\Sigma$. This means that $\epsilon_1\epsilon_2\ldots\epsilon_t$ can be transformed to $\epsilon'_1\epsilon'_2\ldots\epsilon'_{t'}$ by doing moves of types i) or ii)
without taking the order of the elements into account. The fact that the relations in $\Sigma$ involve certain permutations implies that what we really get is that via some extra moves of types iii) and iv), $\epsilon_1\epsilon_2\ldots\epsilon_t$ is transformed to $\epsilon'_1\epsilon'_2\ldots\epsilon'_{t'}\tau$ for a certain permutation $\tau$. So at this point our two sequences are
$$\epsilon'_1\epsilon'_2\ldots\epsilon'_{t'}\tau\sigma,$$
$$\epsilon'_1\epsilon'_2\ldots\epsilon'_{t'}\sigma'.$$
The fact that both sequences yield $A_2$ when applied to $A_1$ implies that $B\tau\sigma=B\sigma'$ for $B=A_1\epsilon'_1\epsilon'_2\ldots\epsilon'_{t'}$, which is a move of type iv). 
\end{proof}

We next use Tietze transformations next to change the presentation above. Essentially, what we 
need to do is the following: whenever there is a relator $g=g_1g_2$ we
delete $g$ from our set of generators.
The effect of this transformation on the generating set is that we no longer have elements $g$ coming from edges which are not very elementary. Moreover we will have only two kinds of generators: strict generators coming from strict very elementary edges,  and finite order generators coming from permutations. We denote these sets by
$$W_s=\{(X_j,X_i\epsilon)\mid\epsilon\text{ a very elementary strict expansion},j=|X_i\epsilon|\}$$
and call these very {\sl elementary strict generators}. We also
consider the elements of the set
 $$W_p=\{(X_i,X_i\sigma)\mid\sigma\text{ a permutation}\},$$
and call them {\sl permutations}. From now on we will  use the term {\sl strict generators} for elements in $W_s$ instead of the more precise very elementary strict generator.

The effect of this transformation on the set of relators is as follows: we no longer have to consider relators coming from edges in the tree. Whenever there are two sequences of very elementary operations that give the same $A_2$ when applied to some $A_1$, we have a new relator. Lemma  \ref{moves} implies that these relators can be obtained from relators of the following types
\begin{itemize}
 \item[i)] $R_D$ contains relators of the form $g_1g_2=g_2'g_1'$ with $g_1,g_2,g_1',g_2'$  strict generators coming from moves of disjoint type.
 
 \item[ii)] $R_\Sigma$ contains relators between  strict generators possibly followed by a permutation coming from moves of $\Sigma$ type.
 
 \item[iii)] $R_{PD}$ contains relators of the form $g\sigma=\sigma g$ with $g$ a strict generator and $\sigma$ a permutation coming from moves of type iii).
 
 \item[iv)] $R_{P}$ contains relators of the form $\sigma=\sigma_1\sigma_2$ with $\sigma$, $\sigma_1$ and $\sigma_2$ permutations coming from moves of type iv).
 \end{itemize}
 
 Thus $G$ admits the following (infinite) presentation
 
 \begin{equation}\label{infinitepresentation}\langle W_s\cup W_p\mid R_D\cup R_{\Sigma}\cup R_{PD}\cup R_{P}\rangle.\end{equation}
 

\subsection{Being more explicit}\label{gensys5} 
Let us consider an arbitrary strict generator $(X_j,X_i\epsilon)$  associated to the strict edge $\ov{\epsilon}$. It is completely determined by a triple
$(i,k,t)$ meaning that $\ov{\epsilon}$ is obtained by applying the descending operation of colour $t$ to the $k$-th element of an orbit representative of the set of tuples of order $i$. We will use the triple to denote the generator.
Now we are going to write down explicitly how relators of disjoint type look like with this 
new notation. Recall that these relators come from very elementary strict descending and disjoint operations $\epsilon_1,\epsilon_2$ on  one hand, and $\epsilon_2',\epsilon_1'$ on the other. They are such that
$$\epsilon_1\epsilon_2=\epsilon_2'\epsilon_1'$$
where $\epsilon_1$ and $\epsilon_1'$ are operations of the same colour, say $t$, whereas $\epsilon_2$ and $\epsilon_2'$ are of colour $s$. Moreover $\epsilon_1$ acts at the $k_1$-th and $\epsilon_2'$ acts at the $k_2$-th elements of $X_i$. We may assume that $k_1<k_2$. Observe that this means that if we apply a descending operation to the $k_2$-th element first then the $k_1$-th element remains the same, but if we do it the other way around, i.e., apply a descending operation of colour $t$ to the $k_1$-th element first, then the former $k_2$-th element becomes the $(k_2+n_{t}-1)$-th. Therefore the triples associated to each of $\ov{\epsilon}_1,\ov{\epsilon}_2,
\ov{\epsilon}_2',\ov{\epsilon}_1'$ are
 $$\begin{aligned}
 \ov{\epsilon}_1&:\, (i,k_1,t)=(X_{i+n_{t}-1},X_i\epsilon_1),\\
  \ov{\epsilon}_2&:\, (i+n_{t}-1,k_2+n_{t}-1,s)=(X_{i+n_{t}-1+n_{s}-1},X_{i+n_{t}-1}\epsilon_2),\\
 \ov{\epsilon}_2'&:\, (i,k_2,s)=(X_{i+n_{s}-1},X_i\epsilon_2'),\\
\ov{\epsilon}_1'&:\, (i+n_{s}-1,k_1,t)=(X_{i+n_{s}-1+n_{t}-1},X_{i+n_{s}-1}\epsilon_1),\\
 \end{aligned}$$ 
 and our relator is
 \begin{equation}\label{genericD}(i,k_1,t)(i+n_{t}-1,k_2+n_{t}-1,s)=(i,k_2,s)(i+n_{s}-1,k_1,t)\end{equation}

 Analogously, it is possible to represent a generator $(X_i,\sigma(X_i))$ of \lq\lq permutation type" using the pair $(i,\sigma)$. Now, relators of type $R_{PD}$ come from the fact that applying first a permutation and then a very elementary  strict operation to a tuple, yields  the same as doing it the other way around for a suitable permutation. More explicitly, assume that we start with the tuple $X_i$. Let $\epsilon$ be the operation associated to the triple, say, $(i,k,t)$ and consider a permutation $\sigma$ represented by the pair $(i,\sigma)$. Slightly abusing notation view $\sigma$ as a permutation of the numbers $\{1,\ldots,i\}$. Starting with $X_i$ and performing first the permutation $\sigma$ and then applying the strict descending operation associated to $\ov{\epsilon}'=(i,\sigma(k),t),$ yields the tuple $X_i\sigma\epsilon'$ whose underlying set is the same as that of the tuple $X_i\epsilon$. Therefore there is some permutation $\sigma'$ such that the tuples $X_i\sigma\epsilon'$ and $X_i\epsilon\sigma'$ coincide. And this implies that we have a relator $\ov{\sigma}\cdot \ov{\epsilon}'=
 \ov{\epsilon}\cdot \ov{\sigma}'$ or
\begin{equation}\label{genericPD}(i,\sigma)(i,\sigma(k),t)=(i,k,t)(i+n_t-1,\sigma').\end{equation}

\section{A finite generating set
\label{sec:finite-generation}
}

 In this section, we  show that the generating system $W_s\cup W_p$ can be reduced to a finite one. We begin with $W_s$.
 We will use the following two particular cases of relators of disjoint type:
 
 \smallskip
 
 \noindent \emph{Case 1}: Let $(i,k,t)$ be a triple such that $i-k>n_l-1$ for any colour $l$ where we include the case $l=t$. Assume moreover that the terminal point of the associated edge in $Z/G$, i.e. $\ov{X}_{i+n_t-1}$, is not a root of the tree $\mathcal{T}$. Recall that this edge consists of applying a descending operation of colour $t$, which increases the cardinality in $n_t-1$. Then there is some edge of $\mathcal{T}$ ending in $\ov{X}_{i+n_t-1}$. Let $s$ be the colour of this last edge which is represented as  a triple by $(i+n_t-n_s,i+n_t-n_s,s)$ (recall that we constructed the tree $\mathcal{T}$ in such a way that the last element of each tuple is always being expanded). Now, as $i-k>n_s-1$ we deduce $k<i-n_s+1.$ Thus there is a relator of disjoint type as (\ref{genericD}) but with $i-n_s+1$ instead of $i$, $k$ instead of $k_1$ and $i-n_s+1$ instead of $k_2$. This relator is
 $$(i-n_s+1,k,t)(i+n_t-n_s,i+n_t-n_s,s)=(i-n_s+1,i-n_s+1,s)(i,k,t).$$
Since there is also a relator
 $$(i+n_t-n_s,i+n_t-n_s,s)=1,$$
because it belongs to $\mathcal{T}$, we deduce
 \begin{equation}\label{reducing1}
 (i,k,t)=(i-n_s+1,i-n_s+1,s)^{-1}(i-n_s+1,k,t).
 \end{equation}
This means that $(i,k,t)$ can be expressed in terms of triples with a smaller value of $i$.
 \smallskip
 
 \noindent \emph{Case 2}:  Let $(i,k,t)$ be a triple such that $i\geq k\geq n_t+1$.  Then $k-n_t+1> 1$ and $i-n_t+1\geq 2$. This means that there  is a relator of disjoint type as in (\ref{genericD}) but with $i-n_t+1$ instead of $i$, $1$ instead of $k_1$ and $k-n_t+1$ instead of $k_2$. This relator is
 $$(i-n_t+1,1,t)(i,k,t)=(i-n_t+1,k-n_t+1,t)(i,1,t).$$
From this we deduce 
\begin{equation}\label{reducing2}
(i,k,t)=(i-n_t+1,1,t)^{-1}(i-n_t+1,k-n_t+1,t)(i,1,t),
 \end{equation}
 meaning that $(i,k,t)$ can be expressed in terms of triples with either a smaller value of $i$ or with $k=1$.
 
 Observe now that arguing by induction on $i+k$, equations (\ref{reducing1}) and (\ref{reducing2}) imply  that any element in $W_s$ lies in the finite subgroup generated by the finite subset
 $$\{g\in W_s\mid\text{ the associated triple fails to fulfill both the conditions in case 1 and in case 2}\}.$$

 \begin{example}\label{infgenV} Let us consider  the group $V$, i.e. here we have one colour $t$ and $n_t=2.$ For now let us only concentrate on the strict generators $W_s.$ Note that an element $(i,i,t)$ is the identity. Looking at the representation by tree-pair diagrams, and the choice of $X_i$ in Example \ref{treeV},  it implies that we expand the rightmost leaf of the rightmost tree $X_i$, hence we obtain $X_{i+1}$ and the group element is represented by $(X_{i+1},X_{i+1})$, which is the identity.
Now consider elements $(i,k,t)$, where $k<i-1.$ Again, using the rightmost-tree, we see that after deleting unnecessary carets on the right, we get 
$$(i,k,t)=(k+1,k,t),$$
which is exactly the relator (\ref{reducing1}). For example, consider $(3,1,t)$. Then the corresponding tree-pair diagram looks as follows:

\bigskip
\begin{tikzpicture}[scale=0.6]

  \draw[black]
    (0,0) -- (1, 1.71) -- (2,0);

  \draw[black] (1.2,-1.71) -- (2,0) --(2.8,-1.71);
  
   \draw[black] (2.0,-3.42) -- (2.8,-1.71) --(3.6,-3.42);

          \filldraw (2,0)  node[below=70pt]{$X_4$};

\draw[black] (4.5, 0)   node{$\longrightarrow$};
\draw[black] (4.5, 0) node[above=4pt]{$x_0$};

  \draw[black]
    (7,0) -- (8, 1.71) -- (9,0);

  \draw[black] (6.2,-1.71) -- (7,0) --(7.8,-1.71);
    \draw[black] (8.2,-1.71) -- (9,0) --(9.8,-1.71);

      \filldraw (7.8,-1.71)  node[below=40pt]{$X'_4=X_3\varepsilon$};

   \end{tikzpicture}

   \medskip\noindent In particular, after deleting the rightmost caret in each tree, this is exactly the element $x_0$, see the picture after Remark \ref{basis}.
   
   \noindent Writing
   $$x_{i-2} = (i,i-1,t),$$
   we recover the well  known infinite generating set $\{x_k \,|\, k\geq 0\}$ for $F <V.$ Furthermore, this enables us to simplify the relator (\ref{genericD}) above. We have
   $$(i, k_1,t)(i+1, k_2+1,t)=(i, k_2,t)(i+1, k_1,t).$$
   Using that $(i,k,t)=(k+1,k,t)$ for $k<i-1$, we get the well known relator
   
$$x_k^{-1}x_lx_k=x_{l+1}$$
for any $k$ and $l$. Moreover, observe that strict generators and disjoint relators give us the well known infinite presentation of Thompson's group $F$ (see \cite{cfp}).

 \end{example}

Now we want to reduce $W_p$ in a similar way. The most natural way to do that is using relators of type $R_{PD}$, i.e. those mixing permutations and strict generators. To be able to argue by induction as before, we need to show  that if $i$ is big enough,  any element of the form $(X_i,\sigma(X_i))$, where $\sigma$ is a permutation, can be expressed in terms of permutations with a smaller $i$ and possibly strict generators. As the group of permutations of the tuple $X_i$ is generated by transpositions, we may assume that $\sigma$ itself is a transposition. Now, assume that $i\geq 3n_t$ for $t$ a colour with smallest possible arity $n_t$. As $\sigma$ only moves 2 elements, we may find $n_t$ consecutive elements in $X_i$ which are untouched by $\sigma$. Let $k$ be such that the $k$-th element in $X_i$ is the first one of those $n_t$ consecutive elements, and consider the strict generator associated to the triple $(i-n_t+1,k,t)$. Let $\sigma'$ be the transposition of $X_{i-n_t+1}$ that moves precisely the elements that are also moved by $\sigma$. Then the associated relator (\ref{genericPD}) with $i-n_t+1$ instead of $i$, and $\sigma$, $\sigma'$ interchanged is 
$$(i-n_t+1,\sigma')(i-n_t+1,k,t)=(i-n_t+1,k,t)(i,\sigma).$$
Thus 
\begin{equation}\label{reducing3}(i,\sigma)=(i-n_t+1,k,t)^{-1}(i-n_t+1,\sigma')(i-n_t+1,k,t)\end{equation}  
 as we wanted to show.
  
This discussion can be summarised as follows:

\begin{theorem}\label{finitegeneration} Assume that $U_r(\Sigma)$ is valid and bounded. Then $V_r(\Sigma)$ is generated by the finite set consisting of elements of the following three types:
\begin{itemize}
\item[1)] Strict generators associated to triples $(i,k,t)$ with $i\leq n_t+1$ and $i-k\leq n_s$ for any colour $s$.

\item[2)] Strict generators associated to triples $(i,k,t)$ such that $\ov{X}_{i+n_t-1}$ is a root of the tree $\mathcal{T}$.

\item[3)] Permutations associated to pairs $(i,\sigma)$ such that $i<3n_t$ for some colour $t$.
\end{itemize}
 \end{theorem}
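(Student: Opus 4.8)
The plan is to assemble the reductions \eqref{reducing1}, \eqref{reducing2} and \eqref{reducing3} into a single induction. By the presentation \eqref{infinitepresentation}, $G=V_r(\Sigma)$ is generated by $W_s\cup W_p$, so it suffices to show that every element of $W_s$ and of $W_p$ lies in the subgroup generated by the finite set $\mathcal F$ of generators of types 1), 2) and 3).

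First I would handle the strict generators. Let $g\in W_s$ have associated triple $(i,k,t)$ with $1\le k\le i$, and argue by induction with respect to the lexicographic order on $(i,k)$. If the corresponding edge lies in $\mathcal T$, then $g=1$, so assume not. If $i\ge k\ge n_t+1$, then $g$ satisfies the hypothesis of Case~2, and \eqref{reducing2} writes $g$ as a product of $(i-n_t+1,1,t)^{\pm1}$, $(i-n_t+1,k-n_t+1,t)$ and $(i,1,t)$; the first two triples have strictly smaller first coordinate, and the last has first coordinate $i$ but with $k$ replaced by $1<k$, so the inductive hypothesis applies to all three. If instead $k\le n_t$ and $\ov X_{i+n_t-1}$ is not a root of $\mathcal T$, let $s$ be the colour of the $\mathcal T$-edge terminating at $\ov X_{i+n_t-1}$; as soon as $i-k>n_s-1$ --- which holds in particular whenever $i-k>n_l-1$ for every colour $l$ --- \eqref{reducing1} rewrites $g$ in terms of $(i-n_s+1,i-n_s+1,s)^{\pm1}$ and $(i-n_s+1,k,t)$, both with strictly smaller first coordinate, and again we conclude by induction. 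What remains are the triples with $k\le n_t$ to which \eqref{reducing1} does not apply: either $\ov X_{i+n_t-1}$ is a root, so $g$ is of type 2), or $i-k\le n_s-1$ for the relevant colour $s$, which together with $k\le n_t$ bounds $i$; a short check of these inequalities identifies the surviving triples among those of type 1).

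I would then reduce the permutations. For $(X_i,X_i\sigma)\in W_p$, the subgroup $\{(X_i,X_i\tau)\mid\tau\in S_i\}$ is isomorphic to the symmetric group $S_i$ (Subsection~\ref{eg}), hence is generated by transpositions, so we may take $\sigma$ to be a transposition. If $i\ge 3n_t$ with $n_t$ of minimal arity, the argument preceding \eqref{reducing3} produces $n_t$ consecutive entries of $X_i$ fixed by $\sigma$, and \eqref{reducing3} expresses $(i,\sigma)$ as the conjugate, by the strict generator $(i-n_t+1,k,t)$, of a transposition $(i-n_t+1,\sigma')$ of a shorter tuple; the strict generator is handled by the previous paragraph, and $(i-n_t+1,\sigma')$ has smaller first coordinate, so induction on $i$ drives us down to the case $i<3n_t$, which is type 3).

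The one point that needs care is the well-foundedness of the induction: one must verify that in each of \eqref{reducing1}, \eqref{reducing2} and \eqref{reducing3} every generator on the right-hand side is strictly smaller than the one being rewritten in the chosen well-order --- which is where one uses that ``expand the last entry'' triples are either $\mathcal T$-edges (hence trivial) or are associated to one of the finitely many roots (hence of type 2)) --- and that no triple escapes all of the reductions except those of types 1)--3), and likewise no permutation except those of type 3). It is precisely for this bookkeeping that the construction of $\mathcal T$ in Remark~\ref{max-tree}, in which one always expands the last entry of a tuple, is needed.
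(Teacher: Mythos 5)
Your proof follows the paper's argument essentially verbatim: the same three reductions (\ref{reducing1}), (\ref{reducing2}) and (\ref{reducing3}) are assembled into an induction whose terminal cases are the finitely many generators of types 1)--3), with the same level of detail in the final bookkeeping of which triples survive. The only real difference is that you induct on the lexicographic order on $(i,k)$ where the paper inducts on $i+k$, and your choice is in fact the safer one: in (\ref{reducing1}) the factor $(i-n_s+1,i-n_s+1,s)$ has $i+k$-value $2(i-n_s+1)$, which is not smaller than $i+k$ once $i-k\geq 2(n_s-1)$, whereas its first coordinate always strictly drops.
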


\begin{example}\label{generatingV}
Consider $G=V.$ In Example \ref{infgenV}, we have already recovered the infinite presentation for $F < V.$ In the tree of Example \ref{treeV}, the triples have a single root $X_1$ so we do no have to consider generators as in item 2) of Theorem \ref{finitegeneration}. 
As before, let $i\geq 2$ and denote by $x_{i-2}$ the group element associated to triple $(i,i-1,t)$. Then from Theorem \ref{finitegeneration} one deduces the well known fact that the elements $x_i$, $i\geq 1$ together with the permutations generate the group and that  $x_0$ and $x_1$  
plus permutations are enough.
\end{example}

\begin{remark} Similar generating systems can be obtained without using the space $Z$ by proceeding as  Burillo and Cleary did for the Brin-Thompson groups $sV$ \cite[Theorem 2.1]{burillocleary}. Instead of our first step (Subsection \ref{gensys1}), fix a set of tuples, one for each possible cardinality, which are to be the \lq\lq source tree" of our tree pairs, and as \lq\lq target tree" we allow anything that is obtained from one of these tuples by descending operations and permutations only. If $g\in G$ is an arbitrary element, it follows from the fact that any two bases have a common descendant that $g=(Y_1,Y_2)$ where $Y_1$ and $Y_2$ are obtained in that way. Then, choose $X_i$ in our previously fixed set of tuples (what used to be the set of nodes in $\mathcal{T}$) of the same cardinality as $Y_1$ and $Y_2,$ and observe that $g=g_2g_1^{-1}$ with 
$g_1=(X_i,Y_1)$ and $g_2=(X_i,Y_2).$ These are precisely the type of elements we wanted to verify to be generators of the group.

The choice of that fixed set of tuples can be the same as  in Subsection \ref{gensys2}, but now we no longer need to construct the actual tree $\mathcal{T}$, we only need the nodes.  
For example, we can proceed as follows: as done before, fix a tuple $X_r$ with $r$ elements
and choose integers $m_1,\ldots,m_s$ with
$$d=\sum_{t=1}^sm_t(n_t-1).$$
There is a sequence of operations (first descending, then ascending) that we can perform on the last element of $X_r$ to get a new tuple with exactly $r+d$ elements that we denote $X_r\tau$.  We may repeat the process to get a new tuple $X_r\tau^2 $ and so on. We set $X_r\tau^0:=X_r$, let $X_{i+rd}:=X_r\tau^i$ for  $i\geq 0$ and take the obtained family as our prefixed set of \lq\lq sources". 

\noindent As seen above, our first set of generators is then

$$
\{(X_j,X_i\epsilon)\mid \epsilon\text{ sequence of descending operations}\}.
$$

\noindent Using Subsection \ref{gensys4} this can be further reduced to

$$
\{(X_j,X_i\epsilon)\mid \epsilon\text{ single strict descending operation or permutation}\}.
$$

Again, there is no serious need of the space $Z$ to see that this reduction is possible. One can just check that composition of these elements corresponds to composition of the associated descending operations, in a way similar to that of
\cite{burillocleary}.
The same  happens with the reduction performed in Subsection \ref{gensys5}: basically, we used $Z$ only to have some identities available that allowed us to eliminate some elements from our generating system, but all those identities can be easily checked by hand and one gets the same finite set in the end.
\end{remark}

\section{Finite presentations
\label{sec:finite-presentation}
}

\noindent In this section, we still assume that $U_r(\Sigma)$ is valid and bounded and we add the extra hypothesis that it is also complete to exhibit a procedure that gives a finite presentation.  To do that, we just replace $Z$ by a truncated version $Z^n$ and we use the results of Section \ref{sec:infinite-presentation} to obtain an explicit finite presentation.

\begin{definition}\label{complete}
Using the notation of Definition \ref{sigmadef}, suppose that for all $i\neq i'$, $i,i' \in S$ we have that $\Sigma_2^{i,i'} \neq \emptyset$ and that $f(j)=i'$ for all $j=1,...,n_i$ and $f'(j')=i$ for all $j'=1,...,n_{i'}$.
Then we say that $U_r(\Sigma)$  is {\sl complete}.
\end{definition}

\noindent Considering the Morse function $t(A)=|A|$ in $\mathcal{S}_r(\Sigma)$ 
we can filter the complex with respect to $t$, and define the truncated Stein complex

$$
\mathcal{S}_r(\Sigma)^{n}:=\mbox{ full subcomplex supported on } \{A\in\mathcal{S}_r(\Sigma)\mid t(A)\leq n\}.
$$

In particular this is just the simplicial complex $\mathcal{S}_r(\Sigma)^n$ obtained by considering bases of cardinality bounded by $n$ only. 
Note that this complex was used in \cite[Theorem 3.1]{francescobritaconcha} to show that under the conditions above, $V_r(\Sigma)$ is of type $F_\infty.$ The purpose of this section is to give a recipe for constructing explicit presentations.

Obviously, we can do the same with the complex $Z$ and consider its truncated version $Z^n$ where the tuples have at most $n$ elements. The map $u$ restricts to these truncated versions and the same argument as in Subsection \ref{eg} shows that there is a homotopy equivalence
$$u:Z^n\to|\mathcal{S}_r(\Sigma)^n|.$$
By \cite[Corollary 3.9]{fluch++} for the special case of $sV$ and \cite[Section 3]{francescobritaconcha} for the general case,  assuming that  $U_r(\Sigma)$ is valid, bounded and complete,  there is some positive integer $n_0$ depending on $\Sigma$, such that for any $n\geq n_0$ and any basis $B\in\mathcal{S}_r(\Sigma)$ with cardinality $|B|=n+1$  the link of $B$ in the Stein complex $\mathcal{S}_r(\Sigma)$ is   simply connected. Using Morse Theory (\cite[Corollary 2.6]{bestvinabrady}) we deduce that  for $n\geq n_0$ the inclusion $\mathcal{S}_r(\Sigma)^{n}\subseteq\mathcal{S}_r(\Sigma)^{n+1}$ induces an isomorphism in $\pi_1$ and $\pi_0$. As the space $\mathcal{S}_r(\Sigma)$ is contractible we have
$$\begin{aligned}
1=\pi_1(\mathcal{S}_r(\Sigma))&=\lim \pi_1(\mathcal{S}_r(\Sigma)^n),\\
1=\pi_0(\mathcal{S}_r(\Sigma))&=\lim \pi_0(\mathcal{S}_r(\Sigma)^n).
\end{aligned}$$
 and $1=\pi_1(\mathcal{S}_r(\Sigma)^n)=\pi_0(\mathcal{S}_r(\Sigma)^n)$ for $n\geq n_0$. From this we deduce that $\mathcal{S}_r(\Sigma)^n$ is path connected and simply connected for $n\geq n_0$. This, together with the fact that $u$ is a homotopy equivalence, implies that the same holds true for $Z^n$.
Finally, observe that  $Z^n$ being path connected implies that the same is true for $Z^n/G$. Therefore we can use $Z^n$ instead of $Z$ in Theorem \ref{gpresentation} and as $Z^n/G$ is finite we get a finite presentation. Hence we have

\begin{theorem}\label{fp} Let $U_r(\Sigma)$ be a valid, bounded and complete Cantor algebra, and let $n\geq 1$ such that $Z^n$ is simply connected. Then there is a finite presentation of $V_r(\Sigma)$  involving only strict generators $(i,k,t)$ with $i+n_t-1\leq n$, permutations $(i,\sigma)$ with $i\leq n$ and relators involving these generators only and which is obtained by truncating the presentation
$$\langle W_s\cup W_p\mid R_D\cup R_{\Sigma}\cup R_{PD}\cup R_{P}\rangle$$
given in
(\ref{infinitepresentation}).\end{theorem}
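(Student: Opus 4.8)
The plan is to rerun the whole argument of Sections~\ref{sec:infinite-presentation}--\ref{sec:finite-generation} with the contractible complex $Z$ replaced throughout by its truncation $Z^n$. Two facts about $Z^n$ are needed. First, $Z^n$ must be simply connected; this is the standing hypothesis of the theorem (and, as recalled just before the statement, it holds automatically for $n\ge n_0$ once $U_r(\Sigma)$ is valid, bounded and complete). Second, the free $G$-action on $Z^n$ must be cocompact, so that $Z^n/G$ is a finite $\Delta$-complex. This is immediate: a $k$-simplex of $Z^n$ is a chain $A_0\precsim\dots\precsim A_k$ of tuples of cardinality at most $n$; since any bijection between two admissible subsets of equal cardinality is induced by an element of $G$, the group acts transitively on tuples of each fixed cardinality, and from a fixed tuple there are only finitely many elementary expansions $A_0\precsim A_k$, so $Z^n$ has finitely many $G$-orbits of simplices in every dimension.

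Granting this, I would apply Theorem~\ref{gpresentation} to $Z^n$: choosing a maximal tree $\mathcal T^n$ in the connected complex $Z^n/G$ (which need not be the restriction of the tree $\mathcal T$ of Remark~\ref{max-tree} --- since $Z^n/G$ is connected one simply picks a fresh one) yields a finite presentation of $G\cong\pi_1(Z^n/G)$, with generators the $1$-cells of $Z^n/G$ and relators coming from the boundaries of its $2$-cells together with the edges of $\mathcal T^n$. Next I would repeat the Tietze reduction of Subsections~\ref{gensys3}--\ref{gensys5}. Each $1$-cell of $Z^n/G$ is the orbit of an edge $A_0\buildrel\epsilon\over\precsim A_2$ with $|A_2|\le n$, and whenever $\epsilon$ is not very elementary one factors it as $A_0\precsim A_1\precsim A_2$; since cardinality is monotone along $\precsim$ we have $|A_1|\le|A_2|\le n$, so both the $2$-simplex and the relator $\ov\epsilon=\ov\epsilon_1\ov\epsilon_2$ it produces live in $Z^n/G$. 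Hence every Tietze move that deletes a generator coming from an edge which is not very elementary is available inside the truncation, and what survives are exactly the strict generators $(i,k,t)$ whose edge $\ov X_i\to\ov X_{i+n_t-1}$ stays in $Z^n$, i.e.\ with $i+n_t-1\le n$, together with the permutations $(i,\sigma)$ with $i\le n$.

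For the relators one invokes Lemma~\ref{moves} unchanged: two chains of very elementary operations with a common source and target differ by moves of types i)--iv), and the chains produced by those moves arise from the original ones by reordering and by disjoint/$\Sigma$ exchanges, none of which alters the final tuple, so every intermediate cardinality stays $\le\max(|A_1|,|A_2|)\le n$. Thus the surviving relators are precisely the members of $R_D$, $R_\Sigma$, $R_{PD}$, $R_P$ all of whose generators have cardinality $\le n$; these are exactly the boundary relators of the $2$-cells of $Z^n/G$ after the transformations, hence finite in number, and the presentation obtained is the ``cardinality $\le n$'' truncation of (\ref{infinitepresentation}). The one part that is not a verbatim copy of the untruncated case is the bookkeeping that guarantees no step quietly involves a tuple of cardinality exceeding $n$; this is what the monotonicity remarks above are for, and I expect it to be the main (though essentially routine) obstacle.
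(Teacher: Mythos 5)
Your proposal is correct and follows essentially the same route as the paper: replace $Z$ by $Z^n$ in Theorem~\ref{gpresentation}, using the hypothesis that $Z^n$ is simply connected together with the finiteness of $Z^n/G$, and then observe that the Tietze reductions of Sections~\ref{sec:infinite-presentation}--\ref{sec:finite-generation} never leave the cardinality-$\le n$ range, so the result is the truncation of (\ref{infinitepresentation}). Your extra bookkeeping (cocompactness of the action on $Z^n$, monotonicity of cardinality along the moves of Lemma~\ref{moves}, and the caveat about choosing a maximal tree inside $Z^n/G$) only makes explicit what the paper leaves implicit.
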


The main difference with the reduction process of Section \ref{sec:finite-generation} is that we are now also reducing the set of relators. Moreover, the \lq\lq truncated" set of  generators in the finite presentation obtained this way can be further reduced using the same arguments as in Section \ref{sec:finite-generation}.

\begin{example}\label{exV}
For $G=V$,  in  \cite[Corollary 3.9]{fluch++} there is a explicit condition on $n$ that implies that $Z^n$ is simply connected: we need  
$$1\leq \lfloor{n-1\over 3}\rfloor-1$$
thus we can take $n=7$. 
This means that the set of strict generators in Example \ref{infgenV}  can be reduced to $x_0,\ldots,x_4$ and the relators of disjoint type can be reduced to
$$x_k^{-1}x_lx_k=x_{l+1}$$
where $(k,l,l+1)$ is one of the following tuples:
$(0,1,2)$, 
$(0,2,3)$,
$(0,3,4)$,
$(1,2,3)$,
$(1,3,4)$,
$(2,3,4)$. At this point it is not difficult to write down a finite presentation of $V$.  Note also that in Example \ref{generatingV} we had already reduced to two strict generators $x_0$ and $x_1.$

Recently Bleak and Quick found a short finite presentation for $V$ with
$2$ generators and $9$ relations using different methods \cite{bleakquick}.

Using our methods we get a finite presentation of Thompson's group $F,$ and by using Tietze moves this presentation can be transformed to the well-known
$$\langle x_0,x_1\mid x_0^{-3}x_1x_0^{3}=x_1^{-1}x_0^{-2}x_1x_0^2x_1,x_0^{-2}x_1x_0^{2}=x_1^{-1}x_0^{-1}x_1x_0x_1\rangle$$
\end{example}

\begin{example}
For $G=sV$ we can also use \cite[Corollary 3.9]{fluch++} to compute the value of $n$ making $Z^n$ simply connected: we need 
$$1\leq \lfloor{n-1\over 2^s}\rfloor-1$$ thus we can take $n=1+2^{s+1}$.
Recall that when choosing the maximal tree in $Z/G$ we chose expansion by one colour only (see Subsection \ref{treeV}). Let that colour be denoted by $1$. For the same reason as in Example \ref{infgenV} we now have that elements of the form $(i,i,1)$ are the identity, and that for any colour $t$ and any $k<i-1$, we have that $(i,k,t)=(k+1, k, t)$. 

This now gives an infinite $W_s$, which for $G=2V$ can be listed as follows:
$$(i+1,i,1), \qquad (i+1,i,2), \qquad \mbox{and} \qquad (k,k,2),$$
which corresponds to the infinite order generators $A_{i-1},B_{i-1}$ and $C_{i}$ of Brin's infinite generating set of $2V$, see \cite{brin1} or \cite{burillocleary}.  Now by condition 1) of Theorem \ref{finitegeneration} this can be reduced to a finite generating set with $7$ strict generators, those where $i\leq 2$ and $k \leq 3,$ as well as a finite number of permutation generators.
Using Theorem \ref{fp} without any further reductions, we get a finite presentation where $i\leq 7$ and $k\leq 8.$

\end{example}

\section{Finite presentation for centralisers of finite subgroups}\label{presentation}

\noindent The proof of \cite[Theorem 4.9]{francescobritaconcha} can be used to show that whenever the
group $V_k(\Sigma)$ is finitely presented for any $k$, then so is $C_{V_r(\Sigma)}(Q)$
for any finite $Q\leq V_r(\Sigma),$ but the proof there does not yield an explicit finite
presentation. In this section we are going to construct a finite presentation of $C_{V_r(\Sigma)}(Q)$. To do that, we proceed as follows.
The first thing to observe is that, according to  \cite[Theorem 4.2]{francescobritaconcha} , the group $C_{V_r(\Sigma)}(Q)$ is a direct product of groups of the form
$$\colim(U_{r'}(\Sigma),L)\rtimes V_{r'}(\Sigma).$$
We now summarise the  notation developed in \cite{francescobritaconcha}. The semidirect product above is associated to a fixed transitive permutation representation $\varphi:Q\to S_m$ of the finite group $Q,$ where $S_m$ is the symmetric group of degree $m,$ the orbit length. Then $L$ is the centraliser of the image $\varphi(Q)$ in $S_m$ and thus  is a finite group. The number $r'$ depends on $\varphi,$ see \cite[Theorem 4.2]{francescobritaconcha},  but in order to simplify notation we will just set $r'=r$.
 The set of bases in $U_r(\Sigma)$ together with the expansion maps can be viewed as a directed graph and we let $(U_r(\Sigma),L)$ be the following diagram of groups associated to this graph: To each basis $A$ we associate $\text{Maps}(A,L)$,
the group with elements the maps from $A$ to $L$ where the group operation is induced by multiplication in $L$.
Each simple expansion $A\leq B$ corresponds to the diagonal map $\delta:\text{Maps}(A,L)\to\text{Maps}(B,L)$ with $\delta(f)(a\alpha_i^j)=f(a)$, where $a\in A$ is the expanded element. Then we consider the direct limit $\colim(U_r(\Sigma),L)$ whose elements are determined by some basis $A$ and a map $A\to L$. Observe that we may always assume that the basis $A$ satisfies $X_r\leq A$.

We begin by studying presentations for $\colim(U_r(\Sigma),L)$. We will obtain an infinite presentation (see Lemma \ref{prescolim} below) and then we will use the semidirect product action of $V_r(\Sigma)$ on this presentation together with the so called Burnside procedure described in the Appendix to get a (finite) presentation of the group $\colim(U_{r}(\Sigma),L)\rtimes V_{r}(\Sigma).$

We begin by constructing a generating system for the group $\colim(U_r(\Sigma),L)$. Take $x\in L$ and $A$ a basis with $X_r\leq A$. Take some subset $A_1\subseteq A$ and let $\chi_{A_1,x}\in\colim(U_r(\Sigma),L)$ be the element that maps every $a\in A_1$ to $x$ and every $a\in A\setminus A_1$ to the identity $1\in L$. It is easy to see that the set of all the elements of this form generates our group, but observe that there might be a uniqueness issue because if we had another basis $C$ with $A\leq C$ and $C_1$ is the subset of those elements in $C$ coming from elements in $A_1$, then $\chi_{A_1,x}=\chi_{C_1,x}$. To avoid this problem we set
$$\omega(A_1):=\{b\text{ descendant of elements in }X_r\mid aw=bw'\text{ for some $a\in A_1$ and descending words $w,w'$}\}
$$
(this was denoted $A_1(\L)$ in  \cite{francescobritaconcha}) and
$$\Omega:=\{\omega(A_1)\mid A_1\text{ subset of some basis }A\geq X_r\}.$$
At first sight, this set $\Omega$ seems different from the set $\Omega$ defined in \cite{francescobritaconcha}, which was defined for arbitrary finite subsets of the set of all descendants of elements in $X_r$. But \cite[Lemma 4.5 i)]{francescobritaconcha} shows that since we are assuming that our Cantor algebra is valid and bounded they are in fact equal.

We set $\chi_{\omega,x}:=\chi_{A_1,x}$, where  $\omega=\omega(A_1)$. Observe that the proof of \cite[Lemma 4.5 i)]{francescobritaconcha} also implies that $\omega(A_1)=\omega(C_1)$, provided that $A\leq C$ and $C_1$ is the subset of those elements in $C$ coming from elements in $A_1$ (or, in other words, $C_1=C\cap\omega(A_1)$). As a consequence one easily sees that for any $B_1$ subset of a basis $B$ with $X_r\leq B$
$$\chi_{A_1,x}=\chi_{B_1,x}\iff\omega(A_1)=\omega(B_1)$$
implying that $\chi_{\omega,x}$ is well defined.

We will need a bit more of the notation from \cite{francescobritaconcha}. Let $\omega\in\Omega$ and $A_1\subseteq A\geq X_r$ with $\omega=\omega(A_1)$.
We set
$$\|\omega\|=\Bigg\{\begin{aligned}
&t \text{ if }|A_1|\equiv t\text{ mod }d\text{ with }0<t\leq d\\
&0\text{ if }\omega=\emptyset.\\
\end{aligned}$$
This does not depend on $A_1$, see  \cite[Lemma 4.5 v)]{francescobritaconcha}. Now, let $\omega_1,\omega_2\in\Omega$ and 
 $A_1,A_2\subseteq A\geq X_r$ with $\omega_i=\omega(A_i)$ for $i=1,2.$ Observe that the fact that our Cantor algebra is bounded means that we can always find such $A_1$ and $A_2$. If $A_1\cap A_2=\emptyset$, we write $\omega_1\wedge\omega_2=\emptyset$. Again, this is well defined by 
  \cite[Lemma 4.5 vi)]{francescobritaconcha}.

\begin{lemma}\label{prescolim}  The following is a presentation of
$\colim(U_r(\Sigma),L)$:
$$\langle (\chi_{\omega,x})_{\omega\in\Omega\setminus\emptyset,x\in L}\mid \mathcal{R}_1,\mathcal{R}_2,\mathcal{R}_3\rangle$$
where
$$\mathcal{R}_1=\{\chi_{\omega,xy}^{-1}\chi_{\omega,x}\chi_{\omega,y}\mid\omega\in\Omega,x,y\in L\},$$
$$\mathcal{R}_2=\{[\chi_{\omega,x},\chi_{\omega',y}]\mid\omega,\omega'\in\Omega,\omega\wedge\omega'=\emptyset\}\text{ and}$$
$$\mathcal{R}_3=\{\chi_{\omega,x}^{-1}\chi_{\omega_1,x}\chi_{\omega_2,x}\mid\omega,\omega_1,\omega_2\in\Omega,\omega=\omega_1\mathrel{\dot\cup}\omega_2\},$$
where $\omega_1\mathrel{\dot\cup}\omega_2$ denotes
the disjoint union.
Moreover $V_r(\Sigma)$ acts  by permutations with finitely many orbits on this presentation.
\end{lemma}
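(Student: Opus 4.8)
The plan is to verify the two assertions separately: first that $\langle (\chi_{\omega,x})\mid \mathcal{R}_1,\mathcal{R}_2,\mathcal{R}_3\rangle$ is a presentation of $D:=\colim(U_r(\Sigma),L)$, and then that $V_r(\Sigma)$ acts on this presentation with finitely many orbits. For the first part I would denote by $\widehat D$ the group defined by the displayed presentation, and by $\pi\colon \widehat D\to D$ the homomorphism sending the abstract generator $\chi_{\omega,x}$ to the actual element $\chi_{\omega,x}\in D$; this is well defined because all three families of relations hold in $D$ (for $\mathcal{R}_1$ this is pointwise multiplication in $L$; for $\mathcal{R}_2$ it is that disjoint supports commute; for $\mathcal{R}_3$ it is that a characteristic-type function on a disjoint union splits as a product, using that $\omega = \omega_1\mathrel{\dot\cup}\omega_2$ is detected on a common basis by \cite[Lemma 4.5]{francescobritaconcha}). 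Since the $\chi_{\omega,x}$ generate $D$, $\pi$ is surjective, so it remains to produce a section, i.e. to show $\pi$ is injective; equivalently, that every element of $\widehat D$ can be written in a normal form that $\pi$ sends injectively into $D$.

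For the normal form I would argue as follows. Every element of $D$ is represented, for a suitable basis $A\geq X_r$, by a map $f\colon A\to L$, and $f = \prod_{a\in\supp f}\chi_{\{a\},f(a)}$ with the factors commuting by $\mathcal{R}_2$ (distinct singletons have disjoint $\omega$'s). So $D$ is generated by the "atomic" elements $\chi_{\{a\},x}$ with $a$ a descendant of $X_r$ and $x\in L$; using $\mathcal{R}_3$ repeatedly one rewrites any $\chi_{\omega,x}$ as a product of such atoms (refine $A_1$ down to singletons on a large enough basis), and using $\mathcal{R}_1$ one collects the $L$-value at each atom. Thus in $\widehat D$ every element equals a product $\prod_{a\in F}\chi_{\{a\},x_a}$ over a finite set $F$ of pairwise "incompatible" descendants (no one a descendant of another) with $x_a\in L\setminus\{1\}$, and I would check that $\mathcal{R}_1,\mathcal{R}_2,\mathcal{R}_3$ suffice to bring any word into this shape and that the shape is unique up to the obvious $\Sigma$-equivalence on the index set $F$ — which is exactly the data of an element of $D$. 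This gives a bijection on underlying sets and shows $\pi$ is an isomorphism. The key technical input throughout is \cite[Lemma 4.5]{francescobritaconcha}, which guarantees that $\omega$, $\|\omega\|$, and $\wedge$ are basis-independent and that any finitely many $\omega$'s are realised as subsets of one common basis, so that all rewriting can be carried out "on a single basis" where it reduces to elementary manipulation of $\text{Maps}(A,L)$.

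For the second assertion, $V_r(\Sigma)$ acts on $D$ by permuting the descendants of $X_r$, hence it acts on $\Omega$ (by \cite[Lemma 4.5]{francescobritaconcha} the action is well defined, as $g\cdot\omega(A_1) = \omega(g A_1)$), and on generators by $g\cdot\chi_{\omega,x} = \chi_{g\omega,x}$; this visibly permutes each of the sets $\mathcal{R}_1,\mathcal{R}_2,\mathcal{R}_3$ among themselves and preserves the relations $\omega\wedge\omega' = \emptyset$ and $\omega=\omega_1\mathrel{\dot\cup}\omega_2$, so it is an action on the presentation. Finiteness of the number of orbits then follows from the transitivity of $V_r(\Sigma)$ on bases of each admissible cardinality: any $\omega\in\Omega$ is $\omega(A_1)$ for $A_1\subseteq A$ with $|A|$ some fixed representative cardinality modulo $d$, and $V_r(\Sigma)$ acts transitively on the admissible $A$ of that cardinality while an element of the stabiliser of $A$ (a symmetric group) can move $A_1$ to any subset of the same size; hence the orbits of generators are parametrised by the finitely many pairs $(\|\omega\|, x)\in\{0,1,\dots,d\}\times L$ (together with at most finitely many "small" $\omega$ of bounded size, as in Remark \ref{max-tree}), and likewise the orbits of relators are finite in number, being parametrised by the finitely many combinatorial types of the configurations $(\omega)$, $(\omega,\omega')$ with $\omega\wedge\omega'=\emptyset$, and $(\omega_1,\omega_2)$, each up to the $V_r(\Sigma)$-action.

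The main obstacle I expect is the injectivity of $\pi$, i.e. proving that $\mathcal{R}_1,\mathcal{R}_2,\mathcal{R}_3$ really are enough relations — concretely, showing that any two words in the generators that are equal in $D$ can be connected using only these relations. The delicate point is handling the $\Sigma$-identifications: two descendants $aw$ and $bw'$ may coincide in $U_r(\Sigma)$, so the "singleton" atoms are not literally free, and one must check that the collapse of $\omega$'s forced by $\Sigma$ is already a consequence of $\mathcal{R}_3$ (refining through a common basis) rather than needing an extra family of relations. I would isolate this as the crux and dispatch it by the "work on a common basis" reduction above, where it becomes the statement that $\text{Maps}(A,L)$ with the diagonal transition maps has the expected colimit — which is essentially bookkeeping once \cite[Lemma 4.5]{francescobritaconcha} is in hand.
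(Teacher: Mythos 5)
Your proposal is correct and follows essentially the same route as the paper: surjectivity is immediate from the relations holding in the colimit, injectivity is obtained by pulling all the $\omega_i$ back to a common basis $A$, refining with $\mathcal{R}_3$, commuting and collecting with $\mathcal{R}_2$ and $\mathcal{R}_1$ into a disjointly-supported normal form whose triviality in the colimit forces all $L$-coefficients to be $1$, and orbit-finiteness follows from transitivity of $V_r(\Sigma)$ on configurations with the same numerical invariants $\|\omega\|$. The only cosmetic difference is that you refine all the way down to singletons of $A$, whereas the paper refines the family $\{A_1,\dots,A_k\}$ only to subsets that are pairwise disjoint or equal; both reductions rest on the same input from \cite[Lemma 4.5]{francescobritaconcha}.
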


\begin{proof}  As observed above,
any $\chi\in\colim(U_r(\Sigma),L)$ is a product of elements of the form $\chi_{\omega,x}$ for a
suitable $\omega\in\Omega$ and $x\in L$.
Let $F$ denote the free group on the set $\{\widetilde\chi_{\omega,x}\mid \omega\in\Omega\setminus\emptyset,x\in L\}$. There is   an epimorphism
$$F\buildrel\tau\over\twoheadrightarrow\colim(U_r(\Sigma),L)$$
with $\tau(\widetilde{\chi}_{\omega,x})=\chi_{\omega,x}$.
Let $G$ be the abstract group defined in the statement of the
result for the generators $\widetilde{\chi}_{\omega,x}$. It is immediate to verify that the epimorphism $\tau$
defined above induces an epimorphism from $G$ to $\colim(U_r(\Sigma),L)$
which we still call $\tau$. 
This follows since all  relations inside $G$ are easily verified to hold for the images
$\tau(\widetilde{\chi}_{\omega,x})$. Assume that we have a word $\widetilde{w}=w(\widetilde{\chi}_{\omega_1,x_1}, \ldots, \widetilde{\chi}_{\omega_k,x_k})$, for some $\omega_1,\ldots,\omega_k\in\Omega$ and $x_1,\ldots,x_k\in L$.
Assume further that
\[
1=\tau(\widetilde{w})=\tau(w(\widetilde{\chi}_{\omega_1,x_1}, \ldots, \widetilde{\chi}_{\omega_k,x_k}))=w(\tau(\widetilde{\chi}_{\omega_1,x_1}), \ldots, \tau(\widetilde{\chi}_{\omega_k,x_k})).
\]

Let $X_r\leq A$ be a basis with subsets $A_i\subseteq A$ such that $\omega_i=A_i(\L)$ for $i=1, \ldots, k$. 
We now refine the set $\{A_1,...,A_k\}$ to a set $\{A'_1,...,A'_{k'}\}$ of subsets of $A$ such that for all $i,j \leq k'$ either $A'_i\cap A'_j=\emptyset$ or $A'_i=A'_j.$  By suitably applying the relations in $\mathcal{R}_3$ to both the original word $w(\widetilde{\chi}_{\omega_1,x_1}, \ldots, \widetilde{\chi}_{\omega_k,x_k})$  and its image
$w:=\tau(\widetilde{w})=w({\chi}_{\omega_1,x_1}, \ldots,{\chi}_{\omega_k,x_k}),$ we may rewrite 
each occurrence of $\chi_{\omega_i,x_i}$ and $\widetilde{\chi}_{\omega_i,x_i}$ in terms of suitable new elements
$\tau(\widetilde{\chi}_{\omega'_j,y_j})$ and ${\chi}_{\omega'_j,y_j}$ for $1\leq j\leq k'$, so that either $\omega'_j\wedge\omega_i'=\emptyset$ or $\omega'_j=\omega'_i$. 

Reordering them so that $\omega_1,\ldots,\omega_u$ for $1\leq u\leq k'$ are pairwise distinct and applying the relations in $\mathcal{R}_2$ and $\mathcal{R}_1$ 
to group together the suitable products of the $y_j$'s
we obtain new words

\[
\widetilde{w}\sim\widetilde{w}'=\widetilde{\chi}_{\omega'_1,z_1} \ldots \widetilde{\chi}_{\omega'_{u},z_{u}}, \qquad
{w}\sim {w}'={\chi}_{\omega'_{1},z_{1}} \ldots {\chi}_{\omega'_{u},z_{u}},
\]
where the $\omega_i'$'s are pairwise disjoint.

If ${w}'\sim 1$, we must have  $z_i=1$ for any $1\leq i\leq u$,
by applying the word ${w}'$ to an $a \in A_i$ such that
$A_i(\mathcal{L})=\omega_i'$.
From $\mathcal{R}_1$ it is immediate to see that $\widetilde{\chi}_{\omega,1}=1$ for any $\omega\in\Omega$
 so we also have $\widetilde{w}\sim \widetilde{w}'\sim 1$
and  $G$ gives a presentation of $\colim(U_r(\Sigma),L)$.

By \cite[Lemma 4.7]{francescobritaconcha}, the group $V_r(\Sigma)$ acts by permutations on $\Omega$. Moreover, for any $g\in V_r(\Sigma)$, if $\omega,\omega'\in\Omega$ are such that $\omega\wedge\omega'=\emptyset$, then $g\omega\wedge g\omega'=\emptyset$ and if $\omega=\omega_1\cup\omega_2$ for $\omega_1,\omega_2\in\Omega$, then $g\omega=g\omega_1\cup g\omega_2$. Therefore $V_r(\Sigma)$ acts by permutations on this presentation.
To prove the last statement, it suffices to check the following:

\smallskip
\noindent \emph{Claim 1}: The set of generators is $V_r(\Sigma)$-finite.

\smallskip
\noindent \emph{Claim 2}: Each of the sets of relations $\mathcal{R}_1, \mathcal{R}_2,\mathcal{R}_3$ is $V_r(\Sigma)$-finite.

\smallskip
As the group $L$ is finite, both claims follow from slight variations of the proof of 
\cite[Lemma 4.7]{francescobritaconcha}.
For example, for Claim 2 for $\mathcal{R}_2$, it suffices to check that
whenever we have $\omega,\omega',\widehat\omega,\widehat\omega'\in\Omega$ with $\omega\wedge\omega'=\emptyset$, $\widehat\omega\wedge\widehat\omega'=\emptyset$, $\|\omega\|=\|\widehat\omega\|$ and $\|\omega'\|=\|\widehat\omega'\|$, then there is some $g\in V_r(\Sigma)$ such that for any $x\in L$, $\chi_{\widehat\omega,x}=\chi_{\omega,x}^g$ and $\chi_{\widehat\omega',x}=\chi_{\omega',x}^g$. To get  a suitable $g,$  choose bases $X_r\leq A,\widehat A$ so that for  $B,B'\subseteq A$ and $\widehat B,\widehat B'\subseteq \widehat A$, we have $\omega=\omega(B)$, $\omega'=\omega(B')$, $\widehat\omega=\omega(\widehat B)$, $\widehat\omega'=\omega(\widehat B')$, $|A|=|\widehat A|$, $|B|=|\widehat B|$ and $|B'|=|\widehat B'|$. The assumptions imply that $B\cap B'=\emptyset=\widehat B\cap\widehat B'$. So we may choose a $g\in V_r(\Sigma)$ with $gA=\widehat A$, $gB=\widehat B$ and $gB'=\widehat B'$.

In a completely analogous way one proves that for $\omega,\omega_1,\omega_2,\widehat\omega,\widehat\omega_1,\widehat\omega_2\in\Omega$ with $\omega=\omega_1\cup\omega_2$,
$\widehat\omega=\widehat\omega_1\cup\widehat\omega_2$, $\|\omega\|=\|\widehat\omega\|$, $\|\omega_1\|=\|\widehat\omega_1\|$ and $\|\omega_2\|=\|\widehat\omega_2\|$ there is some $g\in V_r(\Sigma)$ such that for any $x\in L$, $\chi_{\widehat\omega,x}=\chi_{\omega,x}^g$, $\chi_{\widehat\omega_1,x}=\chi_{\omega_1,x}^g$ and $\chi_{\widehat\omega_2,x}=\chi_{\omega_2,x}^g$.

\end{proof}

\begin{proposition}\label{finpres} Assume that the group $V_r(\Sigma)$ is finitely presented. Let $Q\leq V_r(\Sigma)$ be a finite subgroup. Given a finite presentation of $V_r(\Sigma),$ Lemma \ref{prescolim} together with Theorem \ref{thm:burnside-3} yield an explicit finite presentation of $C_{V_r(\Sigma)}(Q).$ 
\end{proposition}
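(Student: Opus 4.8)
The plan is to reduce $C_{V_r(\Sigma)}(Q)$ to a single ``building block'', plug the presentation of Lemma~\ref{prescolim} into that block, and let Theorem~\ref{thm:burnside-3} turn it into a finite presentation. \emph{Reduction to one semidirect factor.} By \cite[Theorem 4.2]{francescobritaconcha}, $C_{V_r(\Sigma)}(Q)$ is a finite direct product of groups of the form $\colim(U_{r'}(\Sigma),L)\rtimes V_{r'}(\Sigma)$ with $L$ a finite group, and each $V_{r'}(\Sigma)$ occurring is finitely presented (following the running convention $r'=r$, this is the hypothesis). A finite direct product of finitely presented groups is finitely presented by an explicit presentation --- the disjoint union of the generating sets, the union of the relator sets, and all commutators between generators of distinct factors --- so it suffices to produce an explicit finite presentation of $N\rtimes G$, where $N:=\colim(U_r(\Sigma),L)$ and $G:=V_r(\Sigma)$.

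\emph{The symmetric infinite presentation.} Lemma~\ref{prescolim} presents $N$ as $\langle\,\Xi\mid\mathcal R_1\cup\mathcal R_2\cup\mathcal R_3\,\rangle$ with $\Xi=\{\chi_{\omega,x}\}$, and $G$ acts on this presentation by permuting $\Xi$ via $g\cdot\chi_{\omega,x}=\chi_{g\omega,x}$ and permuting the three relator families, with only finitely many orbits on the generators and on each $\mathcal R_i$. Fix a finite presentation $G=\langle Y\mid\mathcal R_0\rangle$, identifying $Y$ with a finite subset of $G$. Then the semidirect product has the (infinite) presentation
$$N\rtimes G=\big\langle\,\Xi\cup Y\ \big|\ \mathcal R_0\cup\mathcal R_1\cup\mathcal R_2\cup\mathcal R_3\cup\mathcal A\,\big\rangle,$$
where $\mathcal A=\{\,y\,\chi_{\omega,x}\,y^{-1}=\chi_{y\omega,\,x}\ :\ y\in Y,\ \chi_{\omega,x}\in\Xi\,\}$ records the action; since $G$ merely permutes $\Xi$, every relation of $\mathcal A$ has a one-letter right-hand side, so $\mathcal A$ too is a union of finitely many $G$-orbits of relations.

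\emph{Applying the Burnside procedure.} This is exactly the input of Theorem~\ref{thm:burnside-3}: a group presentation on which a finitely presented group --- here $G$, sitting inside $N\rtimes G$ --- acts with finitely many orbits on the generators and on the relations. The procedure then yields an explicit finite presentation of $N\rtimes G$: one keeps $Y$ together with a finite transversal $\chi_{\omega_1,x_1},\ldots,\chi_{\omega_p,x_p}$ of the $G$-orbits on $\Xi$; one keeps $\mathcal R_0$ and a single representative of each $G$-orbit in $\mathcal R_1,\mathcal R_2,\mathcal R_3,\mathcal A$; and one adds, for every transversal generator $\chi_{\omega_i,x_i}$ and every $y\in Y$, the (finitely many) relations rewriting $y\,\chi_{\omega_i,x_i}\,y^{-1}$ as $w\,\chi_{\omega_j,x_j}\,w^{-1}$, where $\omega_j$ is the transversal element in the orbit of $y\omega_i$, $x_j=x_i$, and $w$ is a word in $Y$ with $w\omega_j=y\omega_i$. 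Reassembling the factors from the first step gives the desired explicit finite presentation of $C_{V_r(\Sigma)}(Q)$.

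\emph{Main obstacle.} The real content is checking that the hypotheses of Theorem~\ref{thm:burnside-3} genuinely hold here: that the $G$-action has finitely many orbits on $\Xi$ and on each of $\mathcal R_1$, $\mathcal R_2$, $\mathcal R_3$ --- this is Claims~1--2 in the proof of Lemma~\ref{prescolim}, which rest on \cite[Lemma 4.7]{francescobritaconcha} and the finiteness of $L$ --- likewise on $\mathcal A$, and --- the delicate point --- that the conjugating words $w$ above can be chosen \emph{effectively} from the orbit data, i.e.\ that the orbit maps $G\to G\cdot\chi_{\omega_i,x_i}$ admit explicit sections over the finitely many orbit elements that occur. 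Everything else is routine bookkeeping with Tietze transformations.
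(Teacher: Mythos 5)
Your overall strategy coincides with the paper's: reduce via \cite[Theorem 4.2]{francescobritaconcha} to a single factor $N\rtimes V_r(\Sigma)$ with $N=\colim(U_r(\Sigma),L)$, feed the $V_r(\Sigma)$-symmetric infinite presentation of Lemma \ref{prescolim} into the Burnside machinery, and use the finiteness of the orbit sets. However, there is a genuine gap in how you finish. The presentation produced by Lemma \ref{thm:burnside-1} is $\langle Y_0,Z\mid \widehat R_0,\,T,\,[\mathrm{Stab}_{V_r(\Sigma)}(y),y],\ y\in Y_0\rangle$: besides orbit representatives of generators and relators it contains, for each $y\in Y_0$, the \emph{full} family of stabiliser relations, and these are exactly what make the formal conjugates $t\,y\,t^{-1}$ a well-defined substitute for the deleted generators. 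Your proposal omits them. In their place you add, for each generator $y$ of $V_r(\Sigma)$ and each transversal element $\chi_{\omega_i,x_i}$, a single relation $y\chi_{\omega_i,x_i}y^{-1}=w\,\chi_{\omega_j,x_j}\,w^{-1}$ with one chosen word $w$. These finitely many relations only force $\chi_{\omega_j,x_j}$ to commute with the subgroup generated by the finitely many elements $w^{-1}y$ obtained this way; they do not force commutation with all of $\mathrm{Stab}_{V_r(\Sigma)}(\chi_{\omega_j,x_j})$ (compare with a Reidemeister--Schreier generating set, which would require one such element for every pair (coset representative, generator), not just for the trivial coset). Hence the group you present surjects onto $N\rtimes V_r(\Sigma)$ but need not equal it.

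Correspondingly, you misidentify the delicate point. It is not the effectiveness of sections of the orbit maps, but the fact that the stabilisers $\mathrm{Stab}_{V_r(\Sigma)}(\chi_{\omega_i,z})$ are \emph{finitely generated} --- this is hypothesis (v) of Theorem \ref{thm:burnside-3}, and the paper obtains it from \cite[Lemma 4.7]{francescobritaconcha} together with the finite presentability assumption on the groups $V_k(\Sigma)$. Once one has finite generating sets $\mu_1,\dots,\mu_m$ for these stabilisers, the infinite families $[\mathrm{Stab}(y),y]$ collapse to the finitely many relations $[\mu_i,y]=1$, because a general stabiliser relation $[g,y]=1$ with $g=w(\mu_1,\dots,\mu_m)$ is a consequence of these by conjugation. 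A further, more minor, issue: your claim that the action relations $\mathcal A$ form finitely many $G$-orbits is not justified --- the relevant $G$-action in the appendix is on the free group on the generators of $N$ only, and the words in $\mathcal A$ do not live there; in the paper's setup $\mathcal A$ is not a relator family to be cut down by orbits but is absorbed into the formal-conjugate notation $\widehat R_0$ and the stabiliser relations.
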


\begin{proof} By \cite[Theorem 4.2]{francescobritaconcha}, it suffices to construct an explicit finite presentation of a group of the form $H=\colim(U_r(\Sigma),L)\rtimes V_r(\Sigma)$ when $L$ is an arbitrary finite group.
Let $V_r(\Sigma) =\langle Z \mid T\rangle$
be a finite presentation of $V_r(\Sigma)$ and let $\colim(U_r(\Sigma),L)=\langle Y\mid R\rangle$ be the presentation constructed in Lemma \ref{prescolim}. We need to verify the hypotheses of Theorem \ref{thm:burnside-3}.
In Lemma \ref{prescolim} we have already checked that the group $V_r(\Sigma)$ acts by permutations in this presentation and that there are only finitely many orbits under that action. We may therefore choose $Y_0\subseteq Y$ and $R_0\subseteq R$ finite sets of representatives of these orbits.

The argument in Section \ref{ssec:burnside-2} thus implies that the group $G$
has the following presentation:
\begin{align*}
\langle Y_0,Z \mid \widehat R_0,T, [\mathrm{Stab}_{V_r(\Sigma)}(y),
y],y\in Y_0 \rangle.
\end{align*}

We can give explicit descriptions of possible choices for the sets $Y_0, R_0$. Set $X_r=\{x_1,\ldots,x_r\}$ and let $\omega_i=\omega(\{x_1,\ldots,x_i\})$ for $i=1,\ldots,r$. Then:
$$Y_0=\{\chi_{\omega_i,z}\mid 1\leq i\leq r,z\in L\}.$$

To describe $R_0$, we are going to split it into three pairwise disjoint subsets $R_0=R_0^1\cup R_0^2\cup R_0^3$, according to the three subsets of relations $\mathcal{R}_1$, $\mathcal{R}_2$ and $\mathcal{R}_3$ of Lemma \ref{prescolim}. The simplest one is $R_0^1$:
$$R_0^1=\{\chi_{\omega_i,zy}^{-1}\chi_{\omega_i,z}\chi_{\omega_i,y}\mid 1\leq i\leq r,z\in L\}.$$

For $R_0^2,R_0^3$ it is more convenient to fix a basis $X_r\leq A$ with $|A|\geq 2r$. Then we may choose

$$R_0^2=\{[\chi_{\omega,z},\chi_{\omega',z}]\mid z\in L,\omega=\omega(A_1),\omega'=\omega(A_1'),A_1,A_1'\subseteq A,A_1\cap A_1'=\emptyset\},$$
$$R_0^3=\{\chi_{\omega,z}^{-1}\chi_{\omega_1,z}\chi_{\omega_2,z}]\mid z\in L,\omega_1=\omega(A_1),\omega_2=\omega(A_2),\omega=\omega_1\mathrel{\dot\cup} \omega_2,A_1,A_2\subseteq A\}.$$
Observe that these choices of $R_0^2$ and $R_0^3$ yield redundant presentations.

The previous presentation may not be finite because of all the relations needed to
form $[\mathrm{Stab}_{V_r(\Sigma)}(y), y]$ where $y\in Y_0$.
Notice that $g \in \mathrm{Stab}_{V_r(\Sigma)}(y)$ if and only if
$g(\omega)=\omega$ where $y=\chi_{\omega,z}$ for some $z\in L$.
By \cite[Lemma 4.7]{francescobritaconcha} and the assumption on $V_r(\Sigma)$
we deduce that $\mathrm{Stab}_{V_r(\Sigma)}(y)$ is finitely generated by
some generators $\mu_1,\ldots, \mu_m$.

Consider now the following $m$ relations, which are a subset of the 
stabiliser relations $[\mathrm{Stab}_{V_r(\Sigma)}(y), y]$:
\begin{eqnarray}
\label{rel4} \mu_i \chi_{\omega,z} \mu_i^{-1} = \chi_{\omega,z}, \qquad i=1,\ldots, m.
\end{eqnarray}
If $g \in \mathrm{Stab}_{V_r(\Sigma)}(y)$, then $g=w(\mu_1,\ldots,\mu_m)$
and the stabiliser relation $g \chi_{\omega,z} g^{-1}=\chi_{\omega,z}$ is thus obtained by starting from
relation (\ref{rel4}) for some $i$ and then suitably conjugating this relation to build the word $w$.

Therefore, by Lemmas \ref{thm:burnside-1} and \ref{thm:burnside-2},
the group $H$ has the following finite presentation:
\begin{align*}
\langle Y_0, Z \mid \widehat R_0,T, [\mu_i, y], i=1,\ldots, m, y\in Y_0 \rangle,
\end{align*}
where the elements $\mu_1,\ldots, \mu_m$ are expressed as words in the generators
$Z$.
\end{proof}

\appendix
\section{The Burnside procedure}

We shall now give an outline of the Burnside procedure used in the proof of Proposition \ref{finpres}. As mentioned in the Introduction, we do not claim any originality for this. For example, this procedure has been used, without proof, in 
\cite{kassabovetc}. We are not aware of any place where a proof is presented. 
Hence we include it here for completeness.

\bigskip
The goal is to find a small finite presentation of a group, in the cases where the following
procedure can be applied. The idea is to look for a possibly infinite, but well behaved,
presentation of a group $G$ and a group $Q$ such that the action
of $Q$ on the generators and relators of $G$ cuts them down to a very small number.
At a later stage, the group $Q$ will be assumed to be a subgroup of $G$ and
its action will return a new smaller presentation.

\subsection{Preliminary lemmas}
\label{ssec:burnside-1}

The beginning of this procedure is general and we only require each of the groups
$G$ and $Q$ to have a presentation, without any assumption on them.

Let $G=\langle Y \mid R \rangle$ and $Q=\langle Z \mid T \rangle$ be groups. Let $Q$
act on $Y$
by permutations. Notice that $R \le F(Y)$, the free
group generated by $Y$, and observe that $Q$ also acts on $F(Y)$.
We assume that $Q(R) = R$.
Let $Y_0$ be a set of representatives for the $Q$-orbits in $Y$ and $R_0$ be a set of representatives for the $Q$-orbits in $R$.
We observe that $R_0 \le \langle \, t(a_0) \mid a_0 \in Y_0, t \in Q \, \rangle$ that is, we may express the elements of $R_0$
as products of the results of $Q$ acting on elements of $Y_0$.
In the special case that $Q$ is a subgroup of $G$, we will be able to
express elements in $R_0$ as products of conjugates of elements in $Y_0$ by elements in $Q$.
Hence each element of $R_0$, seen as an element in $G$, can be written in more than one way and we fix an expression of the type $t_1(a_1) \ldots t_k(a_k)$ for such element.
We then define the set  $\widehat{R}_0 \subset \langle t a_0 t^{-1} \mid
a_0 \in Y_0, t \in Q \rangle$ to be the set of fixed expressions for the elements of $R_0$,
where we have replaced the action of $Q$ on $Y_0$ by the conjugation of elements.
That is, if $t_1(a_1) \ldots t_k(a_k)$ is a fixed expression in $R_0$, the corresponding element in $\widehat R_0$ 
is $t_1 a_1 t_1^{-1} \ldots t_k a_k t_k^{-1}$. The set $\widehat{R}_0$ is thus a set of formal expressions which will be used later
to express relations in the groups.

\begin{lemma}
\label{thm:burnside-1}
Following the notation previously defined, we have
\[
G \rtimes Q \cong \langle Y_0,Z \mid \widehat{R}_0, T, [\mathrm{Stab}_Q(y),y], y \in Y_0 \rangle,
\]
where the semi-direct product is given by the action of $Q$ on $G$ as follows: for all $g_1,g_2 \in G$ and $t_1,t_2 \in Q,$ multiplication is given by
$(g_1,t_1)(g_2,t_2)=(g_1 \cdot t_1(g_2),t_1t_2).$
\end{lemma}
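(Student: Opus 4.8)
The plan is to obtain the stated presentation from the standard presentation of a semidirect product by a sequence of Tietze transformations dictated by the $Q$-action on $Y$. First I would record the classical fact that
$$G\rtimes Q\ \cong\ \langle\, Y,Z\ \mid\ R,\ T,\ \{\,zyz^{-1}(z\cdot y)^{-1}\ :\ z\in Z,\ y\in Y\,\}\,\rangle,$$
where $z\cdot y\in Y$ denotes the image of $y$ under the permutation action of $z$; write $\Gamma$ for the group defined by this presentation. Indeed $y\mapsto(y,1)$, $z\mapsto(1,z)$ defines an epimorphism $\Gamma\to G\rtimes Q$ (the relations $R$, $T$ hold in the respective subgroups and $(1,z)(y,1)(1,z)^{-1}=(z\cdot y,1)$), and this map has a two-sided inverse $(g,t)\mapsto\iota_G(g)\iota_Q(t)$, where $\iota_G\colon G\to\Gamma$ and $\iota_Q\colon Q\to\Gamma$ are induced by the presentations $\langle Y\mid R\rangle$ and $\langle Z\mid T\rangle$; the only point to check is that $(g,t)\mapsto\iota_G(g)\iota_Q(t)$ is a homomorphism, which amounts to $\iota_Q(t)\iota_G(g)\iota_Q(t)^{-1}=\iota_G(t(g))$ for all $g\in G$, $t\in Q$, and this follows from the conjugation relations by induction on the length of a word representing $t$.

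Next I would fix, for every $y\in Y$, a representative $y_0\in Y_0$ of its $Q$-orbit, an element $t_y\in Q$ with $t_y\cdot y_0=y$, and a word $w_y$ over $Z\cup Z^{-1}$ representing $t_y$; for $y\in Y_0$ one takes $y_0=y$ and $w_y=1$. The same inductive argument shows that the conjugation relations imply $y=w_y\,y_0\,w_y^{-1}$; adjoining these derived relations and then Tietze-eliminating every $y\in Y\setminus Y_0$ leaves the generating set $Y_0\cup Z$. By construction this is exactly the substitution used to pass from $R_0$ to $\widehat R_0$, so the rewrites of the relations in $R_0$ are precisely the elements of $\widehat R_0$.

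It then remains to see what the surviving relations become. The relations $T$ are untouched. Consider the conjugation relation attached to a pair $(z,y)$: since $y$ and $z\cdot y$ lie in one $Q$-orbit, say that of $y_0\in Y_0$, after the substitution it reads $(zw_y)\,y_0\,(zw_y)^{-1}=w_{z\cdot y}\,y_0\,w_{z\cdot y}^{-1}$, equivalently $s\,y_0\,s^{-1}=y_0$ with $s:=w_{z\cdot y}^{-1}\,z\,w_y$. As an element of $Q$, $s=t_{z\cdot y}^{-1}\,z\,t_y$ fixes $y_0$, because $t_y\cdot y_0=y$ and $t_{z\cdot y}\cdot y_0=z\cdot y$; hence this relation belongs to $[\mathrm{Stab}_Q(y_0),y_0]$. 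Conversely, for any $g\in\mathrm{Stab}_Q(y_0)$ written as a word in $Z$, the inductive argument gives $g\,y_0\,g^{-1}=g\cdot y_0=y_0$, so all the relations $[\mathrm{Stab}_Q(y),y]$ with $y\in Y_0$ are derivable. Finally, for an arbitrary $r\in R$, using $Q(R)=R$ write $r=t_r(r_0)$ (the $Q$-action on $F(Y)$) with $r_0\in R_0$; expanding letter by letter and using the conjugation relations gives $r=w_{t_r}\,r_0\,w_{t_r}^{-1}$ in $\Gamma$, so modulo $T$ and the conjugation relations the normal closure of the rewrites of $R$ equals that of $\widehat R_0$. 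Assembling these observations converts the presentation above into
$$\langle\, Y_0,Z\ \mid\ \widehat R_0,\ T,\ [\mathrm{Stab}_Q(y),y],\ y\in Y_0\,\rangle,$$
which is the claim.

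The bulk of the work — and the only part that is not a routine Tietze manipulation — is the orbit–stabiliser bookkeeping in the previous paragraph: one must choose the data $(y_0,t_y,w_y)$ coherently (in particular compatibly with the fixed expressions that define $\widehat R_0$; two coherent choices differ by a further Tietze move), and then carefully verify both inclusions in the identification of the rewritten conjugation relations with the relations $[\mathrm{Stab}_Q(y),y]$, $y\in Y_0$. I expect this to be the main obstacle; the remaining steps are purely formal.
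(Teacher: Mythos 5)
Your proposal is correct, but it takes a genuinely different route from the paper. You start from the classical presentation $\langle Y,Z\mid R,\,T,\,zyz^{-1}=z\cdot y\rangle$ of $G\rtimes Q$ and reach the target presentation by Tietze transformations: derive $y=w_y\,y_0\,w_y^{-1}$, eliminate the generators $Y\setminus Y_0$, and then identify the rewritten conjugation relations with the stabiliser relations $[\mathrm{Stab}_Q(y_0),y_0]$ and the rewritten $R$ with the normal closure of $\widehat R_0$. The paper instead builds the isomorphism directly: it defines $\varphi\colon F(Y_0\cup Z)\to G\rtimes Q$ on the small generating set, uses Von Dyck to see it factors through $H$, checks surjectivity, and then proves injectivity by hand — introducing the set $\overline{Y}$ of equivalence classes of formal conjugates $t a_0 t^{-1}$ (with $ta_0t^{-1}\sim sa_0s^{-1}$ iff $t^{-1}s\in\mathrm{Stab}_Q(a_0)$), arguing that $F(\overline R)^{F(\overline Y)}\cong F(R)^{F(Y)}$, and rewriting an arbitrary $w\in\ker\varphi$ as $\bigl(t_1a_1t_1^{-1}\cdots t_ka_kt_k^{-1}\bigr)t_{k+1}$. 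The two arguments are doing the same combinatorial work in different packaging: your ``coherent choice of $(y_0,t_y,w_y)$ compatible with the fixed expressions defining $\widehat R_0$'' is exactly the paper's equivalence relation $\sim$ and choice of representatives $\widehat Y$, and in both cases the discrepancy between two lifts of the same $y\in Y$ is absorbed by the relations $[\mathrm{Stab}_Q(y_0),y_0]$ together with $T$. What your approach buys is transparency and brevity: the only nontrivial input is the standard semidirect-product presentation (which you also sketch), after which everything is a mechanical generator elimination, and the injectivity issue never has to be confronted separately. What the paper's approach buys is self-containment and an explicit isomorphism $\varphi$ whose description on generators is used again downstream (e.g.\ in the proof of Proposition \ref{finpres} and Theorem \ref{thm:burnside-3}). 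Your flagged ``main obstacle'' — verifying both inclusions in the identification of the rewritten conjugation relations with the stabiliser relations — is indeed where the content lies, but the computation you give ($s=w_{z\cdot y}^{-1}zw_y$ represents $t_{z\cdot y}^{-1}zt_y\in\mathrm{Stab}_Q(y_0)$, and conversely any stabiliser element acts trivially by the inductive conjugation argument) is complete and correct.
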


\begin{proof}
Let $H$ be the group presented by
$\langle Y_0,Z \mid \widehat{R}_0, T, [\mathrm{Stab}_Q(y),y], y \in Y_0 \rangle$.
Define the group homomorphism $\varphi:F(Y_0 \cup Z) \to G \rtimes Q$ by sending $a_0 \in Y_0$ to $(a_0,1) \in G \rtimes Q$ and $c \in Z$ to $(1,c) \in G \rtimes Q$.
By construction we see that 
\[\varphi(t) \varphi(a_0) \varphi(t)^{-1}=(t(a_0),1)\tag{$\ast$}\]
 for any word $t\in Q$.

\medskip\noindent \emph{Claim 1.} The map $\varphi$ induces a homomorphism $H \to G \rtimes Q$, which we still
call $\varphi$.

\begin{proof}[Proof of Claim 1]
If $d \in T$ is a relation in $H$, then $d=c_1 \ldots c_k$, for some $c_i \in Z$, and
$\varphi(c_1) \ldots \varphi(c_k)=(1,1)$. 
Let now $\widehat{b}_0 \in \widehat{R}_0$ be a relation in $H$, then
$\widehat{b}_0=t_1 a_1 t_1^{-1} \ldots t_k a_k t_k^{-1}$
for some $a_i \in Y_0$ and $t_i \in Q$. Moreover, by applying $(\ast)$ above, we get

\begin{eqnarray*}
\prod_{i=1}^k \varphi(t_i)\varphi(a_i)\varphi(t_i)^{-1} = (\prod_{i=1}^k t_i(a_i),1) =(1,1).
\end{eqnarray*}

\noindent Finally let $a_0 \in Y_0$,
$t \in \mathrm{Stab}_Q(a_0)$. Thus we have, using $(\ast)$ again:
\begin{eqnarray*}
\varphi(t)\varphi(a_0)\varphi(t)^{-1}\varphi(a_0)^{-1} &=& (t(a_0),1)(a_0^{-1},1) = (1,1).
\end{eqnarray*}
Now we just apply Von Dyck's theorem.
\end{proof}

\medskip\noindent \emph{Claim 2.} The map $\varphi$ is surjective.

\begin{proof}[Proof of Claim 2]
Any element $(1,t) \in \{1\} \times Q:=\{(1,s) \mid s \in Q \}$
can be written as $(1,c_1 \ldots c_k)$
for suitable $c_i \in Z$ and so $\varphi(H)$ contains $\{1\} \times Q$.
We observe that any element of $G \times \{1\}:=\{(h,1) \mid h \in G \}$ can be
written as $(t_1(a_1) \ldots t_k(a_k),1)$ for suitable $a_i \in Y_0$ and $t_i \in Q$. By arguing as in  Claim 1 we have
$(g,1)= \varphi(\prod_{i=1}^k t_i a_i t_i^{-1})$.
Thus, $\varphi(H) \ge \langle G \times \{1\}, \{1\} \times Q \rangle = G \rtimes Q$.
\end{proof}

\medskip\noindent \emph{Claim 3.} The map $\varphi$ is injective.

\begin{proof}[Proof of Claim 3]
Any element of $Y$ can be written as
$t(a_0)$, for some $a_0 \in Y_0$ and $t \in Q$. Define
$\overline{Y}^*=\{t a_0 t^{-1} \mid a_0 \in Y_0, t \in Q\}$
to be the set of symbols of $Y$ where we have replaced the action
of $Q$ with the conjugation of elements. We notice that, if $t(a_0)=s(a_0)$, then
$t^{-1}s \in \mathrm{Stab}_Q(a_0)$ and we thus define an equivalence relation on $\overline{Y}^*$
by writing $t a_0 t^{-1} \sim s a_0 s^{-1}$ if and only if $t^{-1}s \in \mathrm{Stab}_Q(a_0)$.
We define $\overline{Y}:=\overline{Y}^*/\sim$ the collection of equivalence classes.

If $a \in Y$ and $a=t(a_0)$, for some $a_0 \in Y_0$ and $t \in Q$, we define an element $\overline{a}$
of $\overline{Y}$ by setting $\overline{a}=\{s a_0 s^{-1} \mid t^{-1}s \in \mathrm{Stab}_Q(a_0)\}$.
With this notation, we observe that $Q$ acts on $\overline{Y}$ through
\[
(s,\overline{a}) \to s \cdot \overline{a} := \overline{st a_0 t^{-1} s^{-1}},
\]
for some $a_0 \in Y_0, t\in Q$ such that $\overline{a}=\overline{t a_0 t^{-1}}$. Also, notice
that the map $\psi:Y \to \overline{Y}$ sending $a \mapsto \overline{a}$
is a $Q$-equivariant bijection, that is  $\psi(s a)=s \psi(a)= s \cdot \overline{a}$
for all $s \in Q$. Hence
the action of $Q$ on $Y$ is equivalent to the action of $Q$ on $\overline{Y}$.
For each element $\overline{a} \in \overline{Y}$ we can fix a representative $t a_0 t^{-1} \in F(Y_0 \cup Z)$
and we call the set of representatives $\widehat{Y}$. By construction, every element $\widehat{b}_0 \in \widehat{R}_0$
can be uniquely written as $\widehat{b}_0=t_1 a_1 t_1^{-1} \ldots t_k a_k t_k^{-1}$,
so we define $\overline{R}_0 \subseteq F(\overline{Y})$ be the set of elements $\overline{t_1 a_1 t_1^{-1}}
\ldots \overline{t_k a_k t_k^{-1}}$. We then let
$\overline{R}
\subseteq F(\overline{Y})$ to be the
set of all elements $\overline{t t_1 a_1 t_1^{-1} t^{-1}}  \ldots \overline{ t t_k a_k t_k^{-1}t^{-1}}$, for any $t \in Q$.

With these definitions, it makes sense to say that the normal closure 
$F(\overline{R})^{F(\overline{Y})}$ inside
$F(\overline{Y})$ is isomorphic to $F(R)^{F(Y)}$ inside $F(Y)$. Also notice that
$F(\overline{Y}) \cong F(\overline{Y}^*/\sim) = \langle \overline{Y}^* \mid R_\sim\rangle$, where $R_\sim$
is the set of all relations of the type $t a_0 t^{-1} \sim s a_0 s^{-1}$ if and only if $t^{-1}s \in \mathrm{Stab}_Q(a_0)$. 

Let $w \in H$, such that $\varphi(w)=(1,1)$. Let $w= c_1 a_1 c_2 a_2 \ldots a_k c_{k+1}$ for $a_i \in Y_0$ and $c_i \in \langle Z \rangle$ and we rewrite $w$ as
\[
w=(c_1 a_1 c_1^{-1}) (c_1 c_2 a_2 c_2^{-1} c_1^{-1}) \ldots (c_1 c_2 \ldots c_k a_k c_k^{-1} \ldots c_1^{-1})
c_1 c_2 \ldots c_k c_{k+1}
\]
Define $t_i= c_1 \ldots c_i$. Then, up to replacing $t_i$ with another suitable $t_i' \in Q$, we can assume that
$\overline{t_i a_i t_i^{-1}} \in \widehat{Y}$. Hence we can write
$w=\left( t_1 a_1 t_1^{-1} \ldots t_k a_k t_k^{-1} \right) t_{k+1}$ and, applying $\varphi$
to the rewriting of $w$ we get $(1,1)
=(t_1(a_1) \ldots t_k(a_k),t_{k+1}).$

Since $t_{k+1}=1$ inside $Q$, we can use the relations of $Q$ to rewrite $t_{k+1}=1$ inside $H$.
Similarly, since $t_1(a_1) \ldots t_k(a_k)=1$ inside $G$ and since the normal closure
$F(\overline{R})^{F(\overline{Y})}$ inside $F(\overline{Y})$ is isomorphic to $F(R)^{F(Y)}$ inside $F(Y)$,
we can use the relations of $G$ to rewrite $t_1 a_1 t_1^{-1} \ldots t_k a_k t_k^{-1}=1$ inside $H$. Therefore $w=1$ in $H$ and so $\varphi$ is injective.
\end{proof}

\medskip\noindent The map $\varphi$ is thus a group isomorphism and we are done.
\end{proof}

\noindent The following result does not depend on the presentations of the relevant groups and relies 
only on the definition of semidirect product.

\begin{lemma}
\label{thm:burnside-2}
Let $G$ be a group and $Q \le G$. Let $G \rtimes Q$
be the semidirect product constructed using the action of $Q$ on $G$
by conjugation inside $G$. Then
\[
G \rtimes Q \cong G \times Q.
\]
\end{lemma}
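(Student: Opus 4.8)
The plan is to write down an explicit isomorphism and check the homomorphism property by a one-line computation. Since $Q$ is a subgroup of $G$, for any $(g,q) \in G \rtimes Q$ the product $gq$ makes sense inside $G$, so I would define $\Phi : G \rtimes Q \to G \times Q$ by $\Phi(g,q) = (gq,\, q)$. First I would recall that, because the action of $Q$ on $G$ defining the semidirect product is conjugation inside $G$, multiplication in $G \rtimes Q$ is given by $(g_1,q_1)(g_2,q_2) = (g_1 q_1 g_2 q_1^{-1},\, q_1 q_2)$.

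Next I would verify that $\Phi$ is a homomorphism. On one side, $\Phi\big((g_1,q_1)(g_2,q_2)\big) = \Phi(g_1 q_1 g_2 q_1^{-1},\, q_1 q_2) = (g_1 q_1 g_2 q_1^{-1} q_1 q_2,\, q_1 q_2) = (g_1 q_1 g_2 q_2,\, q_1 q_2)$; on the other side, $\Phi(g_1,q_1)\,\Phi(g_2,q_2) = (g_1 q_1,\, q_1)(g_2 q_2,\, q_2) = (g_1 q_1 g_2 q_2,\, q_1 q_2)$, and these agree. Then I would note that $\Phi$ is a bijection, with inverse $(h,q) \mapsto (h q^{-1},\, q)$; checking that this is a two-sided inverse is immediate from the definitions. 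Hence $\Phi$ is an isomorphism and the lemma follows.

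There is no genuine obstacle here. The only point that needs a little care is bookkeeping: one must keep distinct the internal product of $G$ (used to form $gq$) and the twisted product of the semidirect product, but once the defining action is spelled out as conjugation the verification reduces to the cancellation $q_1^{-1} q_1 = 1$ displayed above. If one prefers, the same conclusion can be obtained from the standard internal-semidirect-product recognition criterion applied to the subgroups $G \times \{1\}$ and $\{(q^{-1},q) \mid q \in Q\}$ of $G \rtimes Q$, but the explicit map is the most economical route.
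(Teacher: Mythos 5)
Your proof is correct. The paper argues via the internal direct product: it exhibits the two subgroups $G\times\{1\}$ and $\widetilde{Q}=\{(t^{-1},t)\mid t\in Q\}$ of $H=G\rtimes Q$, notes that $(a,x)=(ax,1)(x^{-1},x)$ so they generate $H$, and asserts that both are normal, concluding $H\cong (G\times\{1\})\times\widetilde{Q}\cong G\times Q$. Your explicit map $\Phi(g,q)=(gq,q)$ is really the same isomorphism read off from that factorisation (the first coordinate records the $G\times\{1\}$ part, the second the $\widetilde{Q}$ part), but you verify it by a direct computation rather than by the recognition criterion. What your route buys is completeness and economy: the paper leaves the normality of $\widetilde{Q}$ as ``straightforward'' and does not explicitly record the trivial intersection $\bigl(G\times\{1\}\bigr)\cap\widetilde{Q}=\{(1,1)\}$, both of which are needed for the internal direct product argument, whereas your one-line cancellation $q_1^{-1}q_1=1$ together with the explicit two-sided inverse $(h,q)\mapsto(hq^{-1},q)$ settles everything at once. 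Either argument is perfectly acceptable here.
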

\begin{proof}
Let $H:=G \rtimes Q$ with product given by
$(a,x)(b,y)=(axbx^{-1},xy)$. It is clear that $\widetilde{Q}=\{(t^{-1},t) \mid t \in Q \}$ is a subgroup 
of $H$ and $\widetilde{Q} \cong Q$. Since $(a,x)=(ax,1)(x^{-1},x)$, then $H$ is generated by $G \times \{1\}$ and $\widetilde{Q}$. It is straightforward to verify that $Q$ is normal and so, since
$G \times \{1\}$ too, we get $G \rtimes Q \cong (G \times \{1\}) \times \widetilde{Q} \cong G \times Q$.
\end{proof}

\subsection{The Burnside procedure}
\label{ssec:burnside-2}

We are now ready to explain the Burnside procedure.
We make two additional assumptions with respect 
to those in Subsection \ref{ssec:burnside-1}. We assume that:
\begin{enumerate}
\item the presentation $Q=\langle Z \mid T \rangle$ is finite,
\item  the number of $Q$-orbits in $Y$ is finite (and possibly very small,
in practical applications),
\item the number of $Q$-orbits in $R$ is finite (and also possibly very small),
\item the stabilisers $\mathrm{Stab}_Q(y)$ are finitely generated, for $y \in Y_0$.
\end{enumerate}
Let  $G$ and $Q$ be as defined in Lemma \ref{thm:burnside-1},
 $Q \le G$ and let $Q$ act by conjugation on $G$, then Lemmas 
\ref{thm:burnside-1} and \ref{thm:burnside-2}
imply that
\[
G \times Q \cong \left\langle Y_0,Z \mid \widehat{R}_0, T, [\mathrm{Stab}_Q(y),y] \text{ for }
y \in Y_0  \right\rangle.
\]
We rewrite $Z$ in terms of $Y_0$ and then mod out $Q$. We also
use the finite generation of $\mathrm{Stab}_Q(y)$ to rewrite the stabiliser relations
as conjugations. Therefore we obtain:

\begin{theorem}[Burnside procedure]
\label{thm:burnside-3}
Let $G,Q$ be the groups defined in Lemma \ref{thm:burnside-1}.
Assume that 
\begin{enumerate}
\item $Q \le G$ and $Q$ acts by conjugation on $G$,
\item $Q=\langle Z \mid T \rangle$ is finitely presented,
\item the number of $Q$-orbits in $Y$ is finite,
\item the number of $Q$-orbits in $R$ is finite,
\item the stabilisers $\mathrm{Stab}_Q(y)$ are finitely generated, for $y \in Y_0$.
\end{enumerate}
Then there exists a finite presentation of $G$ of the type:
\[
G = \left\langle 
\begin{array}{cc}
 & R_0, T, \\
Y_0, Z \; \; \Big \vert & cyc^{-1}=y, \text{ for } y \in Y_0, c \text{ generator of } \mathrm{Stab}_Q(y), \\
& \text{finitely 
many extra relations} 
\end{array}
\right\rangle,
\]
where the extra relations are obtained in the following way:
there is a relation for every element $c \in Z$ and
it has the form 
\[
c=\text{word in conjugates of 
elements of } Y_0 \text{ by elements of }Z.
\]
\end{theorem}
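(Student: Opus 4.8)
The plan is to assemble the statement from the two lemmas already proved, feeding in the four finiteness hypotheses in the order they are needed. First I would invoke Lemma \ref{thm:burnside-1} to write $G\rtimes Q\cong\langle Y_0,Z\mid \widehat R_0,T,[\mathrm{Stab}_Q(y),y],\ y\in Y_0\rangle$; note that this lemma requires no finiteness at all, only that $Q$ acts by permutations on $Y$ with $Q(R)=R$, which is part of the standing setup. Then, because of hypothesis (i), $Q\le G$ and the action is conjugation, I would apply Lemma \ref{thm:burnside-2} to replace $G\rtimes Q$ by the direct product $G\times Q$ on the left-hand side. So at this point we have $G\times Q\cong\langle Y_0,Z\mid \widehat R_0,T,[\mathrm{Stab}_Q(y),y],\ y\in Y_0\rangle$, and the task reduces to extracting a presentation of the factor $G$ from this presentation of $G\times Q$, using the remaining hypotheses to keep everything finite.

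Next I would carry out the two ``rewriting'' moves sketched before the theorem. Since $Q\le G$, each generator $c\in Z$ is an element of $G$, hence expressible as a word $w_c$ in the $Y$-generators; pushing this down modulo the $Q$-action, $c=w_c(\text{conjugates of elements of }Y_0\text{ by elements of }Z)$. Adjoining one such relation for each $c\in Z$ — there are finitely many by hypothesis (ii) — lets us eliminate $Z$ as independent generators via Tietze transformations, leaving $Y_0$ alone as the generating set and converting $T$ and the stabiliser relations into relations in conjugates of $Y_0$ by the (now redundant, but still usable as shorthand) letters of $Z$. For the stabiliser relations: by hypothesis (iv) each $\mathrm{Stab}_Q(y)$ is generated by finitely many elements $c_{y,1},\dots,c_{y,m_y}$, and the full relation set $[\mathrm{Stab}_Q(y),y]$ is the normal-closure consequence of the finitely many relations $c_{y,j}\,y\,c_{y,j}^{-1}=y$; this is exactly the computation done in the proof of Proposition \ref{finpres} around equation (\ref{rel4}), so I would cite that argument rather than repeat it. Finally, by hypothesis (iii) we may take $R_0$ finite, and $\widehat R_0$ is its ``conjugated'' avatar, so the relator set is finite. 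Collecting: $G\times Q=\langle Y_0\mid R_0,T,\{c_{y,j}yc_{y,j}^{-1}=y\},\ \{c=w_c\}\rangle$ with all pieces finite, and since the projection $G\times Q\to G$ is split by the inclusion and $Q$ is itself generated (inside $G$) by the words $w_c$, the same presentation with generators $Y_0$ presents $G$ — indeed $G=\langle Y_0\rangle$ because $Q\le G$ contributes nothing new. This yields precisely the displayed presentation in the statement.

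The main obstacle is the bookkeeping in the last step: making rigorous the passage from ``a finite presentation of $G\times Q$ in which one factor is visibly $Q=\langle Z\mid T\rangle$ and $Z$ has been rewritten in terms of $Y_0$'' to ``a finite presentation of $G$''. One has to check that killing $Q$ is legitimate — i.e. that once the relations $c=w_c$ identify each $c$ with an element already in $\langle Y_0\rangle$, the subgroup $\{1\}\times Q$ becomes the \emph{diagonal}-type copy sitting inside $\langle Y_0\rangle$ rather than a genuinely new direct factor, so that the quotient map $G\times Q\to G$ becomes an isomorphism on the presented group. Concretely this is a Tietze-transformation argument: the relations $c=w_c$ let us delete the generators $Z$; what remains is a presentation on $Y_0$ whose relations are $R_0$, the rewrites of $T$, and the rewrites of the stabiliser relations, and one must verify that this presents $G$ and not some proper quotient — which holds because every relation we have added is a genuine relation of $G$ (being a consequence of $R$, using $Q\le G$), and conversely every relation of $G$ follows, since $R_0$ already generates $R$ as a $Q$-set and the $Q$-action is realised inside the group by the words $w_c$. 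I would spell this out carefully but expect no genuinely new difficulty beyond careful Tietze-transformation accounting.
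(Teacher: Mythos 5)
Your proposal is correct and follows essentially the same route as the paper: combine Lemmas \ref{thm:burnside-1} and \ref{thm:burnside-2} to present $G\times Q$, rewrite each $c\in Z$ as a word in conjugates of elements of $Y_0$ to kill the extra $Q$-factor, and use finite generation of the stabilisers to replace each relation family $[\mathrm{Stab}_Q(y),y]$ by the finitely many relations coming from its generators. The paper only sketches the final ``mod out $Q$'' and Tietze steps, which you have elaborated in the same spirit, so there is nothing substantively different to flag.
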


\subsection{An application}
\label{ssec:burnside-3}

The following example is taken from \cite{kassabovetc}. Recall the following
presentation for the alternating group
\[
\mathrm{Alt}(n+2)=\left\langle x_1,\ldots,x_p \mid (x_i)^3, (x_i x_j)^2, i \ne j \right\rangle
\]
where $x_i$ can be realised as the $3$-cycle $(i \; \; \; \; n+1 \; \; \; \; n+2)$. Hence
\[
\mathrm{Alt}(7) = 
\left\langle x_1,x_2,x_3,x_4,x_5 \mid (x_i)^3, (x_i x_j)^2, i \ne j \right \rangle := G.
\]
On the other hand, it can be shown that
\[
\mathrm{Alt}(5) = \left\langle a,b \mid a^5, b^2, (ab)^3 \right\rangle := Q,
\]
where $a$ can be realised as $(1 \; 2 \; 3 \; 4 \; 5)$ and $b=(2 \; 3)(4 \; 5)$. Let $z:=x_1=(1 \; 6 \; 7)$ and observe that $x_i=z^{a^{i-1}}$, for $i=1,\ldots,5.$  Now we check that 
\[
\begin{array}{l}
Y=\{x_1,\ldots,x_5\}, \\
Y_0=\{ z\}, \\
R=\{(x_i)^3, (x_i x_j)^2, i \ne j\},\\
R_0=\{z^3, \left(z z^a \right)^2\}, \\
Z=\{a,b\}, \\
T=\{a^5,b^2,(ab)^3\},
\end{array}
\]
satisfies the conditions of Corollary \ref{thm:burnside-3}. Noting that
$\{[\mbox{Stab}_Q(y),y] \text{ for } y \in Y_0 \} = \{\left[z,b \right], \left[z,(ba)^a\right]\}$, we have that 
\[
G \times Q = \left\langle a,b,z \mid a^5, b^2, (ab)^3, z^3, \left(z z^a \right)^2,
\left[z,b \right], \left[z,(ba)^a\right] \right \rangle
\]

We can write
$a=w_1(x_1,\ldots,x_5)$ and $b=w_2(x_1,\ldots,x_5)$, for suitable words
$w_1,w_2 \in F(x_1,\ldots,x_5)$ and then Corollary \ref{thm:burnside-3}
yields the following finite presentation for $\mathrm{Alt}(7)$:
\[
\mathrm{Alt}(7) = \left\langle
a,b,z \mid R_0, T, \left[z,b \right], \left[z,(ba)^a\right],
a^{-1}w_1(z,z^a,\ldots,z^{a^4}), b^{-1}w_2(z,z^a,\ldots,z^{a^4})
\right\rangle.
\]


\begin{thebibliography}{10}

\bibitem{bensonII} 
D.~J.~ Benson.
\newblock {\em Representations and cohomology. {II}}, volume~31 of {\em
  Cambridge Studies in Advanced Mathematics}.
\newblock Cambridge University Press, Cambridge, second edition, 1998.
\newblock Cohomology of groups and modules.

\bibitem{bestvinabrady} M.~Bestvina and N.~Brady. 
\newblock Morse theory and finiteness properties of groups. 
\newblock{\em Invent. Math.} \textbf{129}(3):445--470, 1997.


\bibitem{matuccietc}  
C.~Bleak, H.~Bowman, A.~Gordon, G.~Graham, J.~Hughes, F.~Matucci, and E.~Sapir.
\newblock Centralizers in {R}. {T}hompson's group ${V}_n$.
\newblock {\em Groups Geom. Dyn.} \textbf{7}, 721--865, 2013.

\bibitem{bleakquick}  
C.~ Bleak, M.~ Quick, \emph{The infinite simple group V of Richard J. Thompson: presentations by permutations}, preprint, \url{http://arxiv.org/abs/1511.02123}.

\bibitem{brinsquier}
M.~G.~Brin, C.~C.~Squier. 
\newblock Presentations, conjugacy, roots, and centralizers in groups of piecewise linear homeomorphisms of the real line. 
\newblock \emph{Comm. Algebra} \textbf{29} (2001), no. 10, 4557--4596.

\bibitem{brin1} 
M.~G.~Brin.
\newblock Higher dimensional {T}hompson groups.
\newblock {\em Geom. Dedicata}, \textbf{108}, 163--192, 2004.

\bibitem{brown} 
K.~S.~Brown.
\newblock Cohomology of Groups
\newblock  Springer-Verlag, New York, 1982.


\bibitem{brown2} 
K.~S.~Brown.
\newblock Finiteness properties of groups.
\newblock  {\em Journal of Pure and Applied Algebra}, \textbf{44}, 45--75, 1987. 


\bibitem{burillocleary} 
J.~Burillo and S.~Cleary.
\newblock Metric properties of higher-dimensional {T}hompson's groups.
\newblock {\em Pacific J. Math.}, 248(1):49--62, 2010.

\bibitem{cfp}
J.~W.~Cannon, W.~J.~Floyd, and W.~R.~Parry.
\newblock Introductory notes on {R}ichard {T}hompson's groups.
\newblock {\em Enseign. Math. (2)}, 42(3-4):215--256, 1996.



\bibitem{Cohn} 
P.~M.~Cohn.
\newblock {\em Universal algebra}, volume~6 of {\em Mathematics and its
  Applications}.
\newblock D. Reidel Publishing Co., Dordrecht, second edition, 1981.


\bibitem{warrenconcha} W.~Dicks and C.~Mart\'inez-P\'erez. Isomorphisms of Brin-Higman-Thompson groups. 
{\em Israel J. of Math.} \textbf{199} (2014) 189--218.

\bibitem{fluch++}
M.~Fluch, M.~Schwandt, S.~Witzel, and M.~C.~B.~ Zaremsky.
\newblock The {B}rin-{T}hompson groups $s{V}$ are of type ${F}_\infty$.
\newblock {\em Pacific J. Math.} \textbf{266} (2013), no. 2, 283--295.


\bibitem{geoghegan} R.~ Geoghegan. Topological methods in group theory. Springer 2008.
\newblock {\em J. Eur. Math. Soc. (JEMS)}, \textbf{13}(2), 391--458, 2011.



\bibitem{kassabovetc}
R.~M.~ Guralnick, W.~M.~ Kantor, M.~Kassabov, and A.~Lubotzky.
\newblock Presentations of finite simple groups: a computational approach.
\newblock {\em J. Eur. Math. Soc. (JEMS)}, \textbf{13}(2), 391--458, 2011.


\bibitem{hatcher} A.~Hatcher 
\newblock Algebraic Topology.
\newblock Cambridge University Press, 2002.

\bibitem{hennigmatucci} 
J.~Hennig and F.~Matucci.
\newblock Presentations for the higher-dimensional {T}hompson groups {$nV$}.
\newblock {\em Pacific J. Math.}, \textbf{257}(1), 53--74, 2012.


\bibitem{higman}
G.~Higman.
\newblock {\em Finitely presented infinite simple groups}.
\newblock Department of Pure Mathematics, Department of Mathematics, I.A.S.
  Australian National University, Canberra, 1974.
\newblock Notes on Pure Mathematics, No. 8 (1974).







\bibitem{desiconbrita2}
D.~Kochloukova, C.~Mart\'inez-P\'erez and B.~E.~A. Nucinkis. {\em  Cohomological finiteness properties of the Brin-Thompson-Higman groups 2V and 3V,} {\em Proceedings of the Edinburgh Mathematical Society} (2) \textbf{56} no. 3, 777-804 (2013).



\bibitem{francescobritaconcha} 
C.~Mart\'inez-P\'erez, F.~Matucci and B.~E.~A. ~Nucinkis. Cohomological finiteness conditions and centralisers in generalisations of Thompson's group $V$, to appear in \emph{Forum Mathematicum}, \url{http://arxiv.org/abs/1309.7858}





\bibitem{britaconcha} 
C.~Mart\'inez-P\'erez, and B.~E.~A. ~Nucinkis. Bredon cohomological finiteness conditions for 
generalisations of Thompson's groups, {\em Groups Geom. Dyn.} \textbf{7} (2013), 931--959, 

\bibitem{quillen} D.~ Quillen. 
\newblock Higher algebraic K-theory, I: Higher K-theories. 
\newblock Lecture Notes in Math. 341 (1973), 85--147.

\bibitem{stein}
M.~ Stein.
\newblock Groups of piecewise linear homeomorphisms.
\newblock {\em Trans. Amer. Math. Soc.}, \textbf{332}(2):477--514, 1992.


\bibitem{zassenhaus}
H.~J.~ Zassenhaus.
\newblock {\em The theory of groups}.
\newblock Dover Publications Inc., Mineola, NY, 1999.
\newblock Reprint of the second (1958) edition.





\end{thebibliography}
\end{document}